%% file: ramsey.tex
\documentclass{amsart}

\usepackage{graphicx,tikz,amssymb,mathtools,tabularx,algorithm,enumerate,enumitem}
\usetikzlibrary{arrows,calc}
\usepackage[justification=centering]{caption}
\usepackage[noend]{algpseudocode}

\makeatletter
\def\BState{\State\hskip-\ALG@thistlm}
\makeatother

\newtheorem{theorem}{Theorem}[section]
\newtheorem{lemma}[theorem]{Lemma}

\newtheorem{corr}[theorem]{Corollary}

\author{Sam Beilis}
\email{beiliss@kean.edu}

\author{Israel R. Curbelo}
\email{israel.curbelo@kean.edu}

\address{Department of Mathematical Sciences, Kean University, Union, NJ 07083}

\title[On the asymptotic behavior of online Ramsey numbers]{On the asymptotic behavior of online Ramsey numbers for stars, paths and cycles}

\begin{document}

\begin{abstract}
The online Ramsey game for graphs $G$ and $H$ is played on the infinite complete graph $K_\mathbb{N}$. Each round, Builder chooses an edge, and Painter colors it red or blue. The online Ramsey number $\tilde{r}(G,H)$ is the smallest integer $t$ for which Builder has a strategy that guarantees a red copy of $G$ or a blue copy of $H$ in at most $t$ rounds. We show that for every fixed $k$, there are constants $\lambda_1$ and $\lambda_2$ such that $\tilde{r}(P_k,P_n)/n$ and $\tilde{r}(P_k,C_n)/n$ converge to $\lambda_1$, and $\tilde{r}(K_{1,k},P_n)/n$ and $\tilde{r}(K_{1,k},C_n)/n$ converge to $\lambda_2$.
\end{abstract}

\maketitle

\input{introduction}
\input{sub}
\input{paths}

\input{stars}

\input{main}

\input{remarks}

\bibliographystyle{acm}
\bibliography{ramsey}

\end{document}

%% file: introduction.tex
\section{Introduction}

The online Ramsey problem for graphs $G$ and $H$ is defined as a two-player game between Builder and Painter. The game is played in rounds on the infinite complete graph $K_\mathbb{N}$. Each round, Builder chooses an edge and Painter immediately and irrevocably colors the edge red or blue. The \emph{online Ramsey number} $\tilde{r}(G,H)$ is the smallest integer $t$ for which Builder has a strategy that guarantees a red copy of $G$ or a blue copy of $H$ in at most $t$ rounds, regardless of the choices that Painter makes.
The online Ramsey number $\tilde{r}(G,H)$ is always bounded above by the \emph{size Ramsey number} $\hat{r}(G,H)$ introduced by Erd\H{o}s, Faudree, Rousseau, and Schelp \cite{efrs-78} which is the smallest integer $t$ for which there exists a graph $G$ with $t$ edges such that every 2-edge-coloring of $G$ with colors red and blue results in a red copy of $G$ or a blue copy of $H$. However, this bound is often far from optimal. 

Significant attention has been given to the online Ramsey problem involving stars, paths and cycles. Beck \cite{bec-83} showed that the size Ramsey number $\hat{r}(P_n,P_n)$ is linear which implies that the online Ramsey number $\tilde{r}(P_n,P_n)$ is linear. Grytczuk, Kierstead and Pra\l{}at \cite{gry-kie-pra-08} and Pra\l{}at \cite{pra-08,pra-12} computed exact values for $\tilde{r}(P_k,P_n)$ when $\max\{k,n\}\leq 9$. They also showed that for any positive integers $k$ and $n$, $n+k-3\leq\tilde{r}(P_k,P_n)\leq2n+2k-7$. These remain the currently best known bounds when $k=n$. Cyman, Dzido, Lapinskas, and Lo \cite{cdll-15} showed that $\tilde{r}(P_3,P_n)=\left\lceil\frac{5(n-1)}{4}\right\rceil$ and $\tilde{r}(P_4,P_n)\leq\left\lceil\frac{7n}{5}-1\right\rceil$ for any integer $n$. In \cite{cdll-15}, they conjectured that their bound for $\tilde{r}(P_4,P_n)$ was tight. This was later independently confirmed by both Bednarska-Bzd\c{}ega \cite{bed-24} and Zhang and Zhang \cite{zha-zha-23}. In \cite{cdll-15}, Cyman, Dzido, Lapinskas, and Lo also showed that $\tilde{r}(P_k,P_n)\geq \frac{3}{2}n+\frac{k}{2}-\frac{7}{2}$ for any integers $k$ and $n$ when $k\geq5$, and they also conjectured that this bound is tight. In \cite{bed-24}, Bednarska-Bzd\c{}ega showed that $\tilde{r}(P_k,P_n)\leq \frac{5}{3}n+12k$, and in \cite{mon-por-24}, Mond and Portier disprove the conjecture of Cyman, Dzido, Lapinskas, and Lo by showing that $\tilde{r}(P_k,P_n)\geq \frac{5}{3}n+\frac{k}{9}-4$ for $k\geq 10$. This lower bound matches the upper bound in \cite{bed-24} up to an additive constant, determining the asymptotic value of the online Ramsey number $\tilde{r}(P_k,P_n)$. In particular, they showed that $\lim_{n\to\infty}\frac{\tilde{r}(P_k,P_n)}{n}=\frac{5}{3}$ for $k\geq10$. In \cite{mon-por-24}, Mond and Portier state that it is unknown whether the asymptotic value of the online Ramsey number $\tilde{r}(P_k,P_n)$ even exists for $5\leq k\leq 9$. 

We show that for every positive integer $k$, the asymptotic value of the online Ramsey number $\tilde{r}(P_k,P_n)$ exists.
\begin{theorem}\label{thm1}
    For every positive integer $k$, $\lim_{n\to\infty}\frac{\tilde{r}(P_k,P_n)}{n}$ exists.
\end{theorem}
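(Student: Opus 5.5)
The plan is to prove an approximate subadditivity statement and then apply Fekete's lemma. Fix $k$ and write $f(n)=\tilde{r}(P_k,P_n)$. We know $f$ is nondecreasing in $n$, since a blue $P_{n+1}$ contains a blue $P_n$. We also know $n+k-3\le f(n)\le 2n+2k-7$, so $f(n)/n$ is bounded between positive constants. The goal is an inequality of the form
\[
f(a+b)\le f(a)+f(b)+C_k
\]
for all $a,b$, where $C_k$ depends only on $k$. Once this holds, $g(n)=f(n)+C_k$ is subadditive, and Fekete's lemma gives that $\lim g(n)/n=\inf g(n)/n$ exists. Since $C_k/n\to 0$, the limit of $f(n)/n$ exists as well.

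To prove the inequality, Builder plays an optimal strategy for $(P_k,P_a)$ on a fresh set of vertices. This takes at most $f(a)$ rounds and yields either a red $P_k$, in which case we are done, or a blue path $Q_1$ on $a$ vertices. Builder then plays an optimal strategy for $(P_k,P_b)$ on a disjoint set of fresh vertices and, unless a red $P_k$ appears, obtains a blue path $Q_2$ on $b$ vertices. The remaining task is to join $Q_1$ and $Q_2$ into a blue path on $a+b$ vertices, or at least on $a+b-O(k)$ vertices, using only $O(k)$ extra edges. Any loss of $O(k)$ vertices can be absorbed using monotonicity. For example, first build paths for $a+c_k$ and $b$ and use $f(a+c_k)\le f(a)+2c_k$, which follows from the general bound that extending a path costs a bounded number of rounds per vertex. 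Alternatively, state subadditivity directly with shifted arguments, $f(a+b-c_k)\le f(a)+f(b)+C_k$; Fekete's lemma still works in this form after the reindexing $h(n)=f(n-c_k)+C_k$.

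The joining step is the main obstacle, because Painter may color the connecting edges red. The fix is to use many endpoints near the ends of the paths. Let $u_1,\dots,u_k$ be the last $k$ vertices of $Q_1$ and $v_1,\dots,v_k$ the first $k$ vertices of $Q_2$; discarding these segments costs only $O(k)$ vertices. Builder queries edges between these two sets. If Painter ever colors one of them blue, say $u_iv_j$, then the part of $Q_1$ up to $u_i$, the edge $u_iv_j$, and the part of $Q_2$ from $v_j$ together form a blue path on at least $a+b-2k$ vertices. If instead every queried edge is red, Builder can order the queries so that the red edges form a zigzag path $u_1v_1u_2v_2\cdots$, which is a red $P_k$ after at most $k-1$ edges.

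So with at most $k$ extra rounds, Builder obtains either a red $P_k$ or a blue path on $a+b-2k$ vertices. This gives $f(a+b-2k)\le f(a)+f(b)+k$, and the Fekete argument above finishes the proof. The only care needed is to check the bookkeeping of the shifted Fekete variant, and to handle small $n$ via the linear bounds.
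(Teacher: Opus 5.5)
Your joining step is the paper's Lemma~\ref{lem3}: a zigzag between the ends of two disjoint blue paths. It correctly yields $f(a+b-2k)\le f(a)+f(b)+k$ for $a,b\ge k$. The trouble is in how you absorb the loss of $2k$ vertices.

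\emph{Monotonicity.} It cannot absorb the loss, because $f(a+b-2k)\le f(a+b)$ points the wrong way.

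\emph{Your first option.} It rests on the unproved claim $f(a+c_k)\le f(a)+2c_k$. Extending a blue path is not cheap in this game. If Builder just queries edges from an endpoint to fresh vertices, Painter answers with a red star, which contains no red $P_4$. The paper needs the alternating red/blue construction of Lemma~\ref{lem2}, and even that only gives $f(n+t)\le f(n)+2k+2t$.

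\emph{Your second option.} It has the shift backwards. With $h(n)=f(n-c_k)+C_k$, subadditivity of $h$ would require $f(a+b-c_k)\le f(a-c_k)+f(b-c_k)+C_k$. Your inequality does not give this: it bounds by $f(a)+f(b)$, and $f(a-c_k)\le f(a)$. The correct reindexing is $h(n)=f(n+2k)+k$. Applying your inequality with $a=n+2k$ and $b=m+2k$ gives $h(n+m)\le h(n)+h(m)$ for all $n,m\ge 1$. Plain Fekete then gives $\lim h(n)/n$, and hence $\lim f(n)/n$, since $f(n)=h(n-2k)-k$.

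With this correction your argument is complete, and it is leaner than the paper's. The paper combines Lemma~\ref{lem3} with the extension Lemma~\ref{lem2} to get the unshifted bound $f(m+n)\le f(m)+f(n)+5k$ (Lemma~\ref{lem5}, valid only for $n>k/2$). It then appeals to the de Bruijn--Erd\H{o}s theorem (Lemma~\ref{lembe}) for almost subadditive sequences. Your shifted sequence is genuinely subadditive for all indices, so you need neither the extension lemma nor the de Bruijn--Erd\H{o}s strengthening. The shift also guarantees $a,b\ge k$ automatically, so no separate treatment of small $n$ is required.

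What the paper's extension lemma buys is the explicit estimate $f(n+t)\le f(n)+2k+2t$, which it reuses for the cycle results. However, once $\lim f(n)/n=L$ is known, the squeeze $f(n)\le\tilde{r}(P_k,C_n)\le f(n+k)+2k$ from Lemmas~\ref{lem1} and~\ref{lem4} already gives the cycle limit as well.
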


Bla\v{z}ej, Dvo\v{r}\'{a}k and Valla \cite{bla-dvo-val-19} showed that $\tilde{r}(C_k,C_n)$ and $\tilde{r}(C_n,C_n)$ are linear in $n$. The best known bounds on $\tilde{r}(C_k,C_n)$ are 
\begin{equation*}\label{eq1}
    2n-1\le\tilde{r}(C_k, C_n) \le 2n + 20k \text{ for even $k \ge 4$ and $n \ge 3k$,}
\end{equation*}
and
\begin{equation*}
2.6n - 3 <\tilde{r}(C_k, C_n) \le 3n + \log_2 n + 50k \text{ for odd $k \ge 3$ and $n \ge 8k.$}
\end{equation*}
Cyman, Dzido, Lapinskas, and Lo \cite{cdll-15} showed that for every connected graph $H$, Painter can avoid creating a monochromatic cycle or a monochromatic copy of $H$ for at least $|V(H)| + |E(H)| - 1$ rounds, providing the lower bound for the even case above. The lower bound in the odd case was proved by Adamski and Bednarska-Bzd\c{e}ga \cite{ada-bed-24}, while both upper bounds were proved by Adamski, Bednarska-Bzd\c{e}ga, and Bla\v{z}ej \cite{ada-bed-bla-24}. Although the exact value of $\tilde{r}(C_k,C_n)$ is unknown for even $k$, the asymptotic value has been shown to be 2. In particular, if $k\geq 4$ is even, then
\[
    \lim_{n\to\infty} \frac{\tilde{r}(C_k,C_n)}{n}=2.
\]
The fact that $\tilde{r}(C_k,P_n)\leq\tilde{r}(C_k,C_n)$, along with the result in \cite{cdll-15}, implies that the asymptotic value for $\tilde{r}(C_k,P_n)$ is also $2$ when $k$ is even. However, the case when $k$ is odd is not as simple, and the asymptotic value remains unknown or whether it even exists.

On the other hand, not much is known in regards to $\tilde{r}(P_k,C_n)$. Cyman, Dzido, Lapinskas, and Lo \cite{cdll-15} showed that $\tilde{r}(P_3,C_n)=\left\lceil\frac{5n}{4}\right\rceil$ for $n\geq 3$. The online Ramsey number $\tilde{r}(P_k,P_n)$ provides a lower bound for each $k$. We show that the asymptotic value of $\tilde{r}(P_k,C_n)$ exists, and that it is equal to the asymptotic value of $\tilde{r}(P_k,P_n)$.
\begin{theorem}\label{thm2}
    For every positive integer $k$, $\lim_{n\to\infty}\frac{\tilde{r}(P_k,C_n)}{n}$ exists.
\end{theorem}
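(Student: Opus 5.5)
Writing $\lambda_1=\lim_{n\to\infty}\tilde{r}(P_k,P_n)/n$, which exists by Theorem~\ref{thm1}, the plan is to squeeze $\tilde{r}(P_k,C_n)$ between two quantities that are both $\lambda_1 n+o(n)$.

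\emph{Lower bound.} A blue $C_n$ contains a blue $P_n$, so any Builder strategy forcing a red $P_k$ or a blue $C_n$ also forces a red $P_k$ or a blue $P_n$. Hence $\tilde{r}(P_k,P_n)\le\tilde{r}(P_k,C_n)$, and
\[
\liminf_{n\to\infty}\frac{\tilde{r}(P_k,C_n)}{n}\ge\lambda_1 .
\]

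\emph{Upper bound.} We need a matching bound $\tilde{r}(P_k,C_n)\le\tilde{r}(P_k,P_{n-c})+O_k(\sqrt n)$, or any error term that is $o(n)$, where $c=c(k)$ is a constant. I would proceed in three stages.
\begin{enumerate}
\item Split $n$ into a long blue path and a few short pieces. Builder first plays the optimal $(P_k,P_m)$ strategy with $m=n-s$, where $s=s(n)$ is sublinear. This produces either a red $P_k$, and we are done, or a blue path $Q$ on $m$ vertices.
\item Close $Q$ into a cycle of exactly $n$ vertices using fresh vertices. Let $u,v$ be the endpoints of $Q$. Builder grows a blue path from $u$ through new vertices, using the standard ``star from an endpoint'' trick: from the current endpoint $x$, Builder queries edges from $x$ to fresh vertices. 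Painter may color at most $k-2$ edges at a single vertex red before a red $P_k$ appears (a red star $K_{1,k-1}$ already contains $P_3$; more care is needed for longer red paths). So each extension step costs $O(k)$ edges. The issue is that red edges accumulate across different vertices and could form a red $P_k$ spread over several vertices. To control this, I would arrange that the red graph on the vertices used consists of disjoint stars, or more generally has bounded components. This can be done by always moving to new vertices after a red answer, in the style of the paper's earlier lemmas on substructures (the \texttt{sub} and \texttt{paths} sections).
\item Finish the cycle. Once the extension has the right length, Builder attempts the closing edge to $v$. If that edge is colored red, Builder uses flexibility near $v$: he keeps several candidate endpoints, obtained by trimming or rerouting the last $O(k)$ vertices of $Q$, and tries closing edges from each. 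With $k$ attempts, Painter cannot color all of them red without creating a red $P_k$, because those red edges share an endpoint structure.
\end{enumerate}

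The total overhead is $O(k^2)$ edges, a constant, so
\[
\tilde{r}(P_k,C_n)\le\tilde{r}(P_k,P_{n-O(k)})+O(k^2),
\]
which gives $\limsup_{n\to\infty}\tilde{r}(P_k,C_n)/n\le\lambda_1$.

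\emph{Main obstacle.} The hard step is closing the cycle while guaranteeing an exact length of $n$ and while preventing the red edges produced during closing from combining with red edges left over from the path phase into a red $P_k$. The phase-one strategy may have left red edges at $u$ and $v$. To sidestep this, I would first try rerouting: Builder uses fresh vertices adjacent to interior points of $Q$ near its ends, and fresh vertices carry no prior red edges. If that fails, I would fall back on an amortized argument: run the path strategy for length $n-\sqrt n$, then spend $O(k\sqrt n)$ rounds building a flexible closing gadget, which is still $o(n)$.
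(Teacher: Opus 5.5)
\textbf{There is a genuine gap: the upper bound is never established.}

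Your overall architecture is fine. The lower bound is Lemma~\ref{lem1}, and given Theorem~\ref{thm1}, any bound of the form $\tilde{r}(P_k,C_n)\le\tilde{r}(P_k,P_{n+c})+c'$, with $c,c'$ depending only on $k$, closes the squeeze. This is exactly the paper's proof of Theorem~\ref{thm3}. For Theorem~\ref{thm2} itself the paper takes another route: it shows $\tilde{r}(P_k,C_n)$ is eventually almost subadditive (Lemma~\ref{lem6}) and applies Corollary~\ref{lemfek}.

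The mechanism you propose for the upper bound cannot work.
\begin{itemize}
\item The ``star from an endpoint'' trick forces nothing against a red $P_k$ once $k\ge 4$. A red star contains no $P_4$ however many leaves it has, so Painter can answer red to every edge $xw$ indefinitely. Your claim that Painter can color at most $k-2$ such edges red is false.
\item Stage 3 fails for the same reason: $k$ closing attempts whose red answers share an endpoint only build a red star.
\item You have the incentives reversed. A red $P_k$ is a win for Builder, so there is nothing to ``prevent.'' Keeping the red graph a union of disjoint stars or bounded components is exactly what lets Painter color red with impunity.
\item Even with a working threat, trimming or rerouting near $v$ changes the cycle length. You give no mechanism guaranteeing that every successful closing yields exactly $C_n$, and the $O(k\sqrt{n})$ gadget is never specified.
\end{itemize}

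\textbf{The missing idea.} Choose the queried edges so that they themselves form a path on $k$ vertices, so an all-red answer is a red $P_k$, while any single blue answer gives the desired blue structure. Also, shrink from a slightly longer path rather than extend a shorter one.
\begin{itemize}
\item Start from a blue $P_{n+k}=v_1\cdots v_{n+k}$ and query the zigzag $v_1v_{n+k}v_2v_{n+k-1}\cdots$ for $k-1$ edges. Any blue edge closes a blue $C_N$ with $n<N\le n+k$.
\item Write this cycle as $u_1\cdots u_Nu_1$, set $\alpha=N-n+1$, and query the path $u_\alpha u_{2\alpha}\cdots u_{k\alpha}$. This is possible once $n\ge k^2+k$.
\item Every chord $u_{(i-1)\alpha}u_{i\alpha}$ skips exactly $\alpha-1=N-n$ vertices. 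So a single blue chord gives a blue $C_n$ of exactly the right length.
\end{itemize}
This is Lemma~\ref{lem4}, and it yields $\tilde{r}(P_k,C_n)\le\tilde{r}(P_k,P_{n+k})+2k$ for large $n$. Combined with Lemma~\ref{lem2}, or directly with Theorem~\ref{thm1}, your squeeze then goes through.
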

\begin{theorem}\label{thm3}
    For every positive integer $k$, there is an $L>0$ such that \[\lim_{n\to\infty}\frac{\tilde{r}(P_k,P_n)}{n}=L=\lim_{n\to\infty}\frac{\tilde{r}(P_k,C_n)}{n}.\]
\end{theorem}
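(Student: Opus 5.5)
Granting Theorems~\ref{thm1} and~\ref{thm2}, write $L_P=\lim_{n\to\infty}\tilde{r}(P_k,P_n)/n$ and $L_C=\lim_{n\to\infty}\tilde{r}(P_k,C_n)/n$. It remains to show $L_P=L_C$ and $L_P>0$.

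\textbf{Lower bound.} Painter may color every edge blue. Builder then needs at least $n-1$ edges to build $P_n$, and at least $n$ edges to build $C_n$. Hence $\tilde{r}(P_k,P_n)\ge n-1$, so $L_P\ge 1>0$. (For $k\ge 2$ the bound $n+k-3$ quoted in the introduction gives the same conclusion; for $k=1$ the game is trivial and one sets $L=1$ after checking the definitions.) Also, a blue $C_n$ contains a blue $P_n$, so $\tilde{r}(P_k,P_n)\le\tilde{r}(P_k,C_n)$. This gives $L_P\le L_C$.

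\textbf{Upper bound $L_C\le L_P$.} This is the main step. I want to show
\[
\tilde{r}(P_k,C_n)\le \tilde{r}(P_k,P_{n'})+o(n)
\]
for some $n'=n+o(n)$, and then use the existence of the limit in Theorem~\ref{thm1}. Builder proceeds in three stages.
\begin{enumerate}
\item Builder first plays the optimal path strategy to obtain a blue path $P_m$ with $m$ slightly larger than $n$.
\item Builder then closes this path into a blue cycle of length exactly $n$ using $O(1)$ or $o(n)$ extra edges.
\item Builder adjusts the length of the cycle at small cost.
\end{enumerate}

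For stage 2, take the endpoints $u,v$ of a blue subpath. Builder repeatedly joins $u$ to fresh vertices, or to vertices of the path near the far end. Each red edge added at $u$ increases the red degree at $u$. If Painter colors $k$ such edges red, the red star at $u$ combines with other red edges to produce a red $P_k$. More carefully, Builder can keep a reservoir of fresh vertices. Each red edge Painter uses wastes only $O(1)$ Builder moves. Once Painter has spent about $k$ red edges at a single vertex, Builder switches endpoints, and red path fragments then grow.

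For stage 3, length adjustment: choose $m=n+c$ and try to close by an edge from $u$ to the path vertex at distance $n-1$. If this edge is red, try the vertex at distance $n-2$, shortening the target and extending the path by one. A red $P_k$ eventually forces success within $O(k^2)$ attempts, because the red edges created form stars and linear forests among these endpoints. Making this bookkeeping precise, so that Painter cannot answer blue only on useless edges, is where I expect the difficulty. My first attempt would use a "doubled path" trick: append two blue tails to both ends and test chords among $O(k)$ endpoint candidates, so that any $k$ red answers contain a red $P_k$ via a bipartite pattern.

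This gives $\tilde{r}(P_k,C_n)\le\tilde{r}(P_k,P_{n+c_k})+C_k$ for constants $c_k,C_k$. Dividing by $n$ gives $L_C\le L_P$, hence $L=L_P=L_C>0$.
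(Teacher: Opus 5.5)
Your reduction is sound and has the same shape as the paper's. $L_P\le L_C$ follows from $\tilde{r}(P_k,P_n)\le\tilde{r}(P_k,C_n)$, and, granting Theorem~\ref{thm1}, any bound $\tilde{r}(P_k,C_n)\le\tilde{r}(P_k,P_{n+c_k})+C_k$ gives $L_C\le L_P$. The gap is that this bound is the real content of the theorem, and you do not give a Builder strategy for it. The one you sketch fails.

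In stages 2 and 3, every probe edge you describe is incident to a current endpoint ($u$, and later the vertex you switch to). So Painter can answer all of them red. The red edges then form a star at $u$, which contains no $P_4$, and nothing in your description makes that star ``combine with other red edges'' into a red $P_k$. Switching endpoints only adds another star.

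Your ``doubled path'' idea also does not give exactness. With the blue path $v_1\ldots v_m$, a chord $v_av_b$ closes a cycle of length $|b-a|+1$. When $m\le 2n-2$, the chords with $|b-a|=n-1$ are pairwise disjoint. So probes that would each yield a blue $C_n$ form a matching, and red answers on them never contain a red $P_3$. One round of probing chords of a blue path cannot both hit the exact length and keep the probes path-shaped.

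The missing idea is the two-phase strategy of Lemma~\ref{lem4}.

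\emph{Phase 1.} Starting from a blue $P_n=v_1\ldots v_n$, Builder probes the zig-zag $v_1v_nv_2v_{n-1}v_3\ldots$, whose first $k-1$ edges form a path. Either they are all red, giving a red $P_k$, or one is blue and closes a blue $C_N$ with $n-k<N\le n$. The length of this cycle is not controlled.

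\emph{Phase 2.} Write $C_N=u_1\ldots u_Nu_1$ and set $\alpha=N+k-n+1$. Builder probes the path of chords $u_\alpha u_{2\alpha}\ldots u_{k\alpha}$. Each such chord bypasses exactly $\alpha-1$ vertices of $C_N$, so any blue one yields a blue cycle of length exactly $N-\alpha+1=n-k$. If all are red, they form a red $P_k$. Wrapping around a cycle is what lets equal-span chords be chained into a path.

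This gives $\tilde{r}(P_k,C_{n-k})\le\tilde{r}(P_k,P_n)+2k$ for large $n$, which is your inequality with $c_k=k$ and $C_k=2k$. Your limiting argument, or Lemma~\ref{lem2} as the paper uses, then finishes the proof.

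A minor point: for $k=1$ you cannot ``set'' $L=1$. The limit is fixed by the definitions; if a single vertex counts as a red $P_1$, then $\tilde{r}(P_1,P_n)=0$. So that degenerate case has to be excluded rather than assigned a value.
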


The asymptotic values of $\tilde{r}(K_{1,k},P_n)$ and $\tilde{r}(K_{1,k},C_n)$ have been shown to be between $\frac{k}{4}$ and $k$ for any $k\ge 1$ by Grytczuk, Kierstead and Pra\l{}at \cite{gry-kie-pra-08}. Latip and Tan \cite{lat-tan-21} showed that the asymptotic value of $\tilde{r}(K_{1,3},P_n)$ is between $\frac{3}{2}$ and $\frac{5}{3}$. They conjectured that $\frac{3}{2}$ lower bound is tight. Song, Wang and Zhang \cite{son-wan-zha-25} recently confirmed this, and Zhi and Zhang \cite{zhi-zha-26} showed that the same holds true for $\tilde{r}(K_{1,k},C_n)$. We show that the asymptotic values of $\tilde{r}(K_{1,k},C_n)$ and $\tilde{r}(K_{1,k},P_n)$ exist for all $k\ge 1$ and that the values are equal.

\begin{theorem}\label{thm4}
    For every positive integer $k$, $\lim_{n\to\infty}\frac{\tilde{r}(K_{1,k},P_n)}{n}$ exists.
\end{theorem}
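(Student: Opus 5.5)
Since $\tilde{r}(K_{1,k},P_n)$ is linear in $n$ (the bounds of Grytczuk, Kierstead and Pra\l{}at give $\frac{k}{4}n \lesssim \tilde{r}(K_{1,k},P_n)\lesssim kn$), I will show that $f(n)=\tilde{r}(K_{1,k},P_n)$ is approximately subadditive:
\[
f(m+n)\le f(m)+f(n)+c_k
\]
for a constant $c_k$ depending only on $k$. The Fekete lemma, applied to $g(n)=f(n)+c_k$, then shows that $\lim g(n)/n=\inf g(n)/n$ exists. Because $c_k/n\to 0$, the limit of $f(n)/n$ exists as well.

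To prove the subadditivity, Builder plays in phases. He first runs an optimal strategy for $(K_{1,k},P_m)$ on a fresh set of vertices. Within $f(m)$ rounds this yields either a red $K_{1,k}$, and we are done, or a blue $P_m$; call its endpoints $u,v$. He then runs an optimal $(K_{1,k},P_n)$ strategy on a disjoint fresh vertex set, obtaining a blue $P_n$ with endpoints $x,y$ (or a red star). Builder now needs to join the two paths using $O(k)$ extra rounds, so that a blue path on at least $m+n$ vertices appears. A simple connection first: Builder queries $uv'$ for fresh vertices, or queries $ux$ directly. If Painter colours $ux$ blue, the concatenation is a blue path on $m+n$ vertices, and an extra vertex only helps because $P_{m+n}\subseteq P_{m+n+1}$. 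If Painter colours it red, Builder tries another endpoint pair, such as $uy$, $vx$ or $vy$, and then fresh connector vertices $w$ via edges $uw, wx$.

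Each red answer adds a red edge at one of the endpoints. Once an endpoint has red degree $k$, Painter has built a red $K_{1,k}$. So the real issue is to find $O(k)$ connection attempts that force, within $O(k)$ rounds, either a red star or a blue bridge between the paths. The cleanest choice is to query the single edges $ux_1, ux_2,\dots$, where the $x_i$ run over $k$ different "endpoint candidates" of the second path. Such candidates are awkward to supply with one fixed path, so instead I plan to play the second phase $k$ times in parallel on disjoint vertex sets, obtaining blue paths $Q_1,\dots,Q_k$ on $n$ vertices each, and then query $u$ against an endpoint of each $Q_i$. Painter can colour at most $k-1$ of these red without giving $u$ red degree $k$, so some edge is blue and yields a blue $P_{m+n}$.

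This, however, costs $k f(n)$ rounds rather than $f(n)$, which breaks subadditivity. That is the main obstacle. To fix it, I would reverse the roles: build many short blue paths cheaply and connect them through a single vertex $u$. Concretely, I would prove $f(m+n)\le f(m)+f(n)+O(k)$ via a "hub" trick. Before each phase, Builder reserves $k$ fresh hub vertices $h_1,\dots,h_k$. After obtaining the blue paths $P$ (end $u$) and $Q$ (end $x$), he queries $uh_i$ and $h_ix$ in turn for $i=1,\dots,k$. If both edges in some pair are blue, the blue path $P\,u\,h_i\,x\,Q$ has $m+n+1$ vertices. Otherwise each hub $h_i$ receives a red edge at $u$ or at $x$, giving $u$ and $x$ a total red degree of $k$. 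To bound this, Builder uses $2k-1$ hubs: by pigeonhole, $u$ or $x$ then gets red degree at least $k$, which is a red $K_{1,k}$. The cost is at most $4k$ extra rounds, so $c_k=4k$ works, and Fekete's lemma completes the proof.
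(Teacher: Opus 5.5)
Your final hub argument is correct, but its joining step differs from the paper's. The paper joins the two blue paths directly (Lemma~\ref{lem2.4}). It queries $u_1v_1,\dots,u_1v_k$ from an endpoint $u_1$ of the second path to the first $k$ vertices of the first path, so a blue answer $u_1v_j$ gives a path that has lost $j-1\le k-1$ vertices. It pays for this loss in advance by growing the first path to $P_{m+k}$ with the one-vertex extension of Lemmas~\ref{lem2.1} and~\ref{lem2.2}, at $k$ rounds per vertex. This gives $\tilde{r}(K_{1,k},P_{m+n})\le\tilde{r}(K_{1,k},P_m)+\tilde{r}(K_{1,k},P_n)+k+k^2$ for all $m,n$. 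Your gadget instead routes the connection through $2k-1$ fresh vertices and applies pigeonhole to the red degrees of both endpoints $u$ and $x$. Nothing is lost (you even gain a vertex), no extension lemma is needed, and the additive constant drops to $2(2k-1)=4k-2$. Either inequality holds for every pair $m,n$, so adding the constant gives a genuinely subadditive sequence and classical Fekete suffices, exactly as you say. The linear bounds of Grytczuk, Kierstead and Pra\l{}at are not needed: $f(n)=\tilde{r}(K_{1,k},P_n)\ge 0$ already makes the infimum finite. What the paper's route buys is reuse: Lemma~\ref{lem2.2} also makes up the vertices lost when closing the blue cycle in Lemma~\ref{lem4.4} and Theorem~\ref{thm6}, so one small toolkit covers all the star results. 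In a write-up, drop the exploratory detours (the $k$ parallel copies, and the ``$k$ hubs'' you later correct to $2k-1$). Also state explicitly that the second phase is played on vertices disjoint from $P$, and that the hubs are chosen after both phases among isolated vertices, so they avoid $V(P)\cup V(Q)$.
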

\begin{theorem}\label{thm5}
    For every positive integer $k$, $\lim_{n\to\infty}\frac{\tilde{r}(K_{1,k},C_n)}{n}$ exists.
\end{theorem}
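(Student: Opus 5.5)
We prove that $\lim_{n\to\infty}\tilde{r}(K_{1,k},C_n)/n$ exists through a Fekete-type subadditivity argument; the path case, Theorem~\ref{thm4}, would be handled the same way. Set $a_n=\tilde{r}(K_{1,k},C_n)$. By \cite{gry-kie-pra-08}, $a_n=\Theta(n)$ for fixed $k$, so $\liminf a_n/n$ and $\limsup a_n/n$ are finite and positive. The goal is an approximate subadditivity inequality
\[
a_{m+n}\le a_m+a_n+C(k) \qquad\text{for all large } m,n,
\]
or at least a weaker version with a sublinear error term $o(m+n)$. The standard Fekete argument then shows that $a_n/n$ converges to $\inf_n (a_n+C)/n$.

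The key step is a concatenation strategy for Builder. Cycles do not concatenate directly, so we work through paths with specified endpoints.

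\begin{enumerate}
\item Define an auxiliary quantity $b_n$: the number of rounds Builder needs to force either a red $K_{1,k}$ or a blue path on $n$ vertices, where the path must additionally satisfy a mild condition. The natural condition is that both endpoints have blue degree less than $k$ into a reserve of fresh vertices, or, more simply, that the endpoints are "extendable".
\item Show $a_n\le b_n+O(k)$. Given a long blue path, Builder closes it to a cycle by querying edges from the endpoints to fresh vertices. Painter cannot color more than $k-1$ edges at any single vertex red without creating a red $K_{1,k}$. So within $O(k)$ extra rounds Builder obtains a blue closing edge, possibly after trimming or rerouting a constant number of path vertices.
\item Conversely, show $b_n\le a_{n+O(1)}+O(k)$. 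A blue $C_{n+c}$ contains a blue $P_n$, so this direction is essentially free.
\item Prove subadditivity for paths: $b_{m+n}\le b_m+b_n+O(k)$. Builder plays a strategy for $b_m$ on one fresh vertex set and a strategy for $b_n$ on a disjoint vertex set. He then joins an endpoint of the first path to an endpoint of the second. If Painter colors the joining edge red, Builder tries another endpoint pairing, or links through a fresh vertex. Since each vertex has red degree at most $k-1$, after $O(k)$ attempts some blue connection must appear.
\end{enumerate}

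The main obstacle is item 4, specifically the fact that Painter's strategy is adaptive. When Painter is forced to make a red star, the game ends early, which only helps Builder. The real difficulty is ensuring that the two sub-games do not interact badly. A red star could be assembled from red edges belonging to both games at a shared vertex; disjoint vertex sets prevent this. Also, the endpoints of the two paths must have small red degree, so that the connecting attempts cannot all be blocked. I would first try to handle this by connecting via fresh vertices. Builder queries edges $xv$ for fresh $v$, with $x$ an endpoint of the first path, until one is blue; at most $k$ queries are needed. He then queries $v$ against the endpoint $y$ of the second path. If $vy$ is red, he chooses a new fresh $v'$ and repeats. Both $x$ and $y$ can absorb at most $k-1$ red edges before a red $K_{1,k}$ appears, so this uses at most $O(k^2)$ rounds. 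That gives $C(k)=O(k^2)$.

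Fekete's lemma then gives the existence of $\lim b_n/n$. Items 2 and 3 transfer this limit to $a_n/n$, which is the statement. As a byproduct, the two limits coincide, matching the equality announced in the introduction.
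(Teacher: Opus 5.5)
The genuine gap is item 2, and that is where all the difficulty lies, even though you single out item 4 as the main obstacle.

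The quantity $\tilde r(K_{1,k},C_n)$ demands a blue cycle on \emph{exactly} $n$ vertices, and a blue $C_{n'}$ with $n'\neq n$ contains no $C_n$. So ``Builder obtains a blue closing edge, possibly after trimming or rerouting a constant number of path vertices'' bounds $a_n$ only if the final length does not depend on Painter's answers. Nothing in your sketch ensures this. Joining both ends $v_1,v_n$ to a common fresh vertex yields $C_{n+1}$, and trimming by an amount that depends on where Painter first answers blue yields a Painter-dependent length.

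Likewise, ``no vertex can take $k$ red edges'' bounds the number of rounds only if every refused attempt puts its red edge on one of a few \emph{fixed} vertices. The natural exact-length closers of a longer blue path are the chords $v_iv_{i+n-1}$, and they do not do this. Each vertex lies on at most two of them, so Painter can refuse them all while keeping red degree at most $2$ everywhere.

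What is missing is a gadget in which every blue outcome closes a cycle of predetermined length while every red outcome lands on a fixed vertex. The paper's Lemma~\ref{lem2.5} does this in two steps:
first, it closes the blue $P_n$ into some blue cycle $u_1u_2\cdots u_Nu_1$ with $n-k+1\le N\le n$, via a star at $v_n$;
second, it queries $u_1u_{i+2}$ and $u_1u_{n-2k+i}$ for $i=1,\dots,k$.
A blue pair gives the blue $C_{n-2k}$ $u_1u_{i+2}u_{i+3}\cdots u_{n-2k+i}u_1$; otherwise $u_1$ is the center of a red $K_{1,k}$. Together with Lemma~\ref{lem2.2} this gives $a_n\le b_{n+2k}+3k\le b_n+3k+2k^2$ for $n\ge k+2$.

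Your item 2 can also be proved literally, for $n\ge 3$. For fresh $w_1,w_2,\dots$, query $v_nw_i$ and, if it is blue, query $w_iv_2$. A blue pair gives the blue $C_n$ $w_iv_2v_3\cdots v_nw_i$. Each failure adds a red edge at $v_n$ or $v_2$, so $4k-2$ rounds suffice. Note that the trim here is one vertex fixed in advance.

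With this supplied, the rest of your plan works and differs mildly from the paper. Take $b_n=\tilde r(K_{1,k},P_n)$ with no endpoint condition; red edges already present at $x$ or $y$ only make a red star appear sooner. Your fresh-vertex join gives $b_{m+n}\le b_m+b_n+3k$. The sandwich $b_n\le a_n\le b_n+O(k^2)$ then transfers the limit. The paper instead proves eventual almost subadditivity of $a_n$ directly (Lemma~\ref{lem4.4}) and uses the sandwich only for Theorem~\ref{thm6}.

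Finally, your fallback of a merely $o(m+n)$ error term would not suffice for a Fekete-type conclusion. De Bruijn and Erd\H{o}s need the error $\phi(m+n)$ to satisfy $\sum_n\phi(n)/n^2<\infty$, so you do need the constant you eventually obtain.
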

\begin{theorem}\label{thm6}
    For every positive integer $k$, there is an $L>0$ such that \[\lim_{n\to\infty}\frac{\tilde{r}(K_{1,k},P_n)}{n}=L=\lim_{n\to\infty}\frac{\tilde{r}(K_{1,k},C_n)}{n}.\]
\end{theorem}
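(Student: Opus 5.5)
The plan is to deduce the statement from Theorems~\ref{thm4} and~\ref{thm5} by squeezing $\tilde{r}(K_{1,k},C_n)$ between two quantities that both behave like $\tilde{r}(K_{1,k},P_n)$ up to $o(n)$. Write $L=\lim_{n\to\infty}\tilde{r}(K_{1,k},P_n)/n$ and $L'=\lim_{n\to\infty}\tilde{r}(K_{1,k},C_n)/n$; both exist by Theorems~\ref{thm4} and~\ref{thm5}. Positivity of $L$ is immediate, since any blue $P_n$ needs at least $n-1$ edges whenever Painter colors everything blue. So $\tilde{r}(K_{1,k},P_n)\ge n-1$ and $L\ge 1>0$. (The result of Grytczuk, Kierstead and Pra\l{}at quoted above also gives $L\ge k/4$.)

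\textbf{Lower bound.} A blue $C_n$ contains a blue $P_n$, so any Builder strategy forcing a red $K_{1,k}$ or a blue $C_n$ also forces a red $K_{1,k}$ or a blue $P_n$. Hence
\[
\tilde{r}(K_{1,k},P_n)\le \tilde{r}(K_{1,k},C_n),
\]
which gives $L\le L'$.

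\textbf{Upper bound.} The goal is
\[
\tilde{r}(K_{1,k},C_n)\le \tilde{r}(K_{1,k},P_{n'})+o(n)
\]
for some $n'=n+o(n)$, or even $n'=n-O(1)$. Dividing by $n$ and using that $\tilde{r}(K_{1,k},P_{n'})/n'\to L$ then gives $L'\le L$.

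To close a blue path into a blue cycle, I would have Builder first run an optimal path strategy to obtain a blue path $P$; if a red $K_{1,k}$ appears, Builder is done. Then Builder extends and closes $P$ using a bounded (in $k$) number of extra edges near its endpoints $u$ and $v$. The natural move is to connect $u$ and $v$, or nearby path vertices, to fresh vertices of $K_\mathbb{N}$, and to join pairs of such fresh vertices. Painter cannot color more than $k-1$ edges red at any single vertex without creating a red $K_{1,k}$. So among about $k$ edges from an endpoint to fresh vertices, at least one is blue, and among a $(2k)$-sized batch some blue structure is forced by pigeonhole. Concretely, Builder takes sets $A$ and $B$ of fresh vertices and plays $u$--$A$ and $v$--$B$ edges. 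Each endpoint has at most $k-1$ red edges, so most of these edges are blue. Builder then plays edges between the blue neighbours of $u$ in $A$ and those of $v$ in $B$. These are bipartite edges inside a set of bounded size, and each vertex can absorb at most $k-1$ red edges, so some $a$--$b$ edge must be blue once $|A|,|B|\ge 2k$. This closes a cycle through $u,a,b,v$ of length $|P|+2$.

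Builder therefore targets a path $P_{n-2}$ first, at a cost of $O(k^2)$ extra rounds. This gives
\[
\tilde{r}(K_{1,k},C_n)\le \tilde{r}(K_{1,k},P_{n-2})+O(k^2),
\]
and hence $L'\le L$.

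\textbf{Main obstacle.} The hard step is making the closing gadget robust. Painter knows which vertices are path endpoints, and red edges already incident to $u$ or $v$ from the path phase reduce their red budget. This only helps Builder, since the red degree at $u$ is at most $k-1$ in total. The real issue is getting exactly the right cycle length. If the construction closes cycles of length $|P|+2$ or $|P|+3$ depending on Painter's choices, I would handle this by choosing the target path length adaptively, or by first building the path and then adjusting it by attaching a fresh vertex. All of these corrections cost $O(k^2)$ rounds, which is negligible against $n$.
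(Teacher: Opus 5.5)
Your proof is correct, but your closing step differs from the paper's.

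\textbf{The paper's route.} The paper obtains the upper bound from Lemma~\ref{lem2.5} applied to $P_{n+2k}$. It closes the blue path onto itself with the edges $v_iv_{n+2k}$, $i\le k$. This yields a blue $C_N$ whose length $N\in[n+k+1,n+2k]$ Painter effectively chooses. It then cuts out a cycle of exact length $n$ with a pair of chords $u_1u_{j+2}$, $u_1u_{n+j}$. Finally it spends $2k^2$ rounds via Lemma~\ref{lem2.2} to get from $P_{n+2k}$ back to $P_n$.

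\textbf{Your route.} You close a shorter path $P_{n-2}$ through two fresh vertices. The cycle $uPvbau$ always has exactly $|P|+2=n$ vertices, so Painter never controls the length and no correction step is needed. The trivial monotonicity $\tilde{r}(K_{1,k},P_{n-2})\le\tilde{r}(K_{1,k},P_n)$ replaces Lemma~\ref{lem2.2}, giving $\tilde{r}(K_{1,k},P_n)\le\tilde{r}(K_{1,k},C_n)\le\tilde{r}(K_{1,k},P_n)+O(k^2)$ directly.

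In particular, the worry in your ``main obstacle'' paragraph about lengths $|P|+2$ versus $|P|+3$ does not arise for the gadget you describe. The gadget can even be done in $O(k)$ rounds: first find one blue neighbour $a$ of $u$ among $k$ fresh vertices. Then find $k$ blue neighbours of $v$ among $2k-1$ further fresh vertices. Finally play the $k$ edges from $a$ to those neighbours.

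\textbf{What each buys.} The paper's route is uniform with Lemma~\ref{lem4} for red $P_k$. Your pigeonhole steps rely on red degrees being at most $k-1$, which has no analogue when the forbidden red graph is $P_k$ (a red star contains no red $P_4$). The close-then-shorten-by-chords idea works in both settings. Your route exploits the bounded red degree specific to stars and is simpler there.
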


Our paper is organized as follows. In Section \ref{sec:sub}, we define a weaker type of subadditivity called almost subadditivity and present a stronger version of Fekete's Lemma which only requires a sequence to be eventually almost subadditive in order to prove the existence of the asymptotic values in this paper. In Section \ref{sec:pre}, we prove a set of preliminary results in the form of inequalities by constructing strategies for Builder. Finally, in Section \ref{sec:main}, we combine these inequalities with the stronger version of Fekete's Lemma to prove our main results. 

%% file: sub.tex
\section{Subadditivity}\label{sec:sub}

A sequence $\{a_n\}_{n=1}^\infty$ is said to be \emph{subadditive} if $a_{m+n}\leq a_m+a_n$ for any pair of positive integers $m,n$. We say that a sequence $\{a_n\}_{n=1}^\infty$ is \emph{almost subadditive} if there is a constant $C$ such that $a_{m+n}\leq a_m+a_n+C$ for any pair of positive integers $m,n$ satisfying $0.5n\leq m\leq 2n$. We provide the original version of Fekete's Lemma below.

\begin{lemma}[Fekete's Lemma \cite{fek-23}] 
    If $\{a_n\}_{n=1}^\infty$ is subadditive, then $\lim_{n\to\infty}\frac{a_n}{n} = \inf_{n\ge 1} \frac{a_n}{n}$.
\end{lemma}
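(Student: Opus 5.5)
The statement is the classical Fekete Lemma, so I will give the standard argument. Let $L=\inf_{n\ge1} a_n/n$, which lies in $[-\infty,\infty)$. The bound $\liminf_{n\to\infty} a_n/n \ge L$ is immediate from the definition of the infimum. The whole task is therefore to show $\limsup_{n\to\infty} a_n/n \le a_m/m$ for every fixed $m$. Taking the infimum over $m$ then gives $\limsup \le L \le \liminf$, so the limit exists and equals $L$; this includes the possibility $L=-\infty$.

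Fix $m\ge1$. For $n>m$, write $n=qm+r$ with $q\ge1$ and $r\in\{1,\dots,m\}$; choosing $r$ in this range avoids needing an $a_0$ term. Applying subadditivity $q$ times gives
\[
a_n \le q\,a_m + a_r .
\]
Let $M=\max_{1\le r\le m}|a_r|$, a finite constant depending only on $m$. Dividing by $n$ gives
\[
\frac{a_n}{n} \le \frac{qm}{n}\cdot\frac{a_m}{m} + \frac{M}{n}.
\]
As $n\to\infty$ with $m$ fixed, $qm/n = 1 - r/n \to 1$ and $M/n\to0$. Hence $\limsup_{n\to\infty} a_n/n \le a_m/m$, as required.

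The only point needing care is the sign of $a_m$ when passing to the limit in the factor $qm/n$. This causes no difficulty because $qm/n$ converges to $1$, so the product converges to $a_m/m$ whatever the sign. The induction giving $a_{qm+r}\le q a_m + a_r$ is routine: apply $a_{x+y}\le a_x+a_y$ repeatedly with $x=m$. Note that this step uses subadditivity for pairs $(m,\,n-m)$ with arbitrary ratio. That is exactly why the paper's almost subadditive variant, where subadditivity is only assumed for $0.5n\le m\le 2n$, will need a different doubling-type argument than the one sketched here.
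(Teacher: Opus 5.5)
Your proof is correct. It is the standard division-with-remainder argument for Fekete's Lemma, and it also covers the case $\inf_n a_n/n=-\infty$.

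The paper gives no proof of this lemma and simply cites Fekete \cite{fek-23}, so there is no competing route to compare. Your closing remark is accurate: this argument uses subadditivity at unbounded ratios, so it does not transfer to the restricted setting $0.5n\le m\le 2n$. The paper handles that setting by citing de Bruijn and Erd\H{o}s rather than proving it.
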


Fekete's Lemma implies that if we can show that $\tilde{r}(P_k,P_n)$ and $\tilde{r}(P_k,C_n)$ are subadditive, then $\tilde{r}(P_k,P_n)/n$ and $\tilde{r}(P_k,C_n)/n$ converge, respectively. Unfortunately, neither $\tilde{r}(P_k,P_n)$ nor $\tilde{r}(P_k,C_n)$ are subadditive. However, de Bruijn and Erd{\H{o}}s \cite{bru-erd-52} weaken the condition of subadditivity.

\begin{lemma}[de Bruijn and Erd{\H{o}}s \cite{bru-erd-52}]\label{lembe}
    If $\{a_n\}_{n=1}^\infty$ is almost subadditive, then $\lim_{n\to\infty}\frac{a_n}{n} = \inf_{n\ge 1} \frac{a_n+C}{n}$.
\end{lemma}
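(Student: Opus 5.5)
Set $b_n=a_n+C$. Almost subadditivity then reads $b_{m+n}\le b_m+b_n$ whenever $0.5n\le m\le 2n$. Put $L=\inf_{n\ge1} b_n/n$, allowing $L=-\infty$ a priori. The plan is to prove that $\liminf a_n/n\ge L$ and that $\limsup a_n/n\le b_N/N$ for every fixed $N$; taking the infimum over $N$ then gives the statement.

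The lower bound is immediate. We have $a_n/n=b_n/n-C/n\ge L-C/n$, and $C/n\to0$. If $L=-\infty$ there is nothing to prove on this side.

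For the upper bound, fix $N$. The idea is to split an arbitrary $t$ into pieces that are almost all equal to $N$, using only splits whose two parts have ratio at most $2$. I will use two kinds of steps.

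\emph{Multiples of $N$.} For $a\ge2$, split $aN=\lceil a/2\rceil N+\lfloor a/2\rfloor N$. The two parts have ratio at most $2$, so by induction on $a$ we get $b_{aN}\le a\,b_N$ for all $a\ge1$.

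\emph{General $t\ge 6N$.} Write $t=x+y$ with $x=N\lceil t/(2N)\rceil$ and $y=t-x$. Then $x\in[t/2,\,t/2+N]$ and $y\in[t/2-N,\,t/2]$. The condition $x\le 2y$ amounts to $t/2+N\le t-2N$, which holds since $t\ge 6N$. So the split is admissible and
\[
b_t\le b_x+b_y\le \frac{x}{N}\,b_N+b_y .
\]
Iterate this on $y$. At each step the non-multiple remainder at least roughly halves, and the process stops once the remainder $r$ satisfies $r<6N$; since each split removes a multiple of $N$, we also have $r\ge 1$. Summing the bounds gives
\[
b_t\le \frac{t-r}{N}\,b_N+b_r\le \frac{t}{N}\,b_N+6|b_N|+B_N,
\]
where $B_N=\max_{1\le s<6N} b_s$ is a constant depending only on $N$. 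The only point to check here is that each intermediate remainder stays $\ge 6N$ whenever it is split; this holds because we split only while that is true.

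Dividing by $t$ gives $a_t/t\le b_t/t\le b_N/N+O_N(1/t)$, so $\limsup_{t\to\infty} a_t/t\le b_N/N$ for every $N$. Hence $\limsup a_t/t\le L$. In particular $L>-\infty$ forces the limit to exist and equal $L$. If $L=-\infty$, the same upper bound shows $a_n/n\to-\infty=L$.

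The main obstacle is that the ratio restriction $0.5n\le m\le 2n$ rules out the usual Fekete decomposition $t=qN+r$ with a small remainder. The unequal-halving recursion above, which peels off multiples of $N$ and carries a single non-multiple branch of geometrically shrinking size, is designed to get around this. It leaves only one bounded error term $B_N+6|b_N|$ instead of an error that accumulates at every split.
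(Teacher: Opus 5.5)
Your proof is correct. Note that the paper gives no proof of this lemma; it simply quotes it from de Bruijn and Erdős, so your argument is a genuinely independent route.

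Their result is more general. It allows the additive error to grow with $m+n$, subject to a summability condition, so a proof at that level has to control errors that accumulate over many splits.

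Your proof uses the special feature of a constant error. After shifting to $b_n=a_n+C$, the sequence is exactly subadditive on admissible splits. The greedy recursion that peels off $x=N\lceil t/(2N)\rceil$ is admissible precisely because $t\ge 6N$, and it incurs no loss at any intermediate step. The only error is the single terminal term $b_r$ with $r<6N$. The halving of the remainder is needed only to guarantee termination, not to make errors summable. This makes your argument shorter and fully elementary, at the cost of not covering the growing-error version.

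The individual checks all go through:
\begin{itemize}
\item The balanced binary splitting gives $b_{aN}\le a\,b_N$.
\item The split $t=x+y$ satisfies $y\le x\le 2y$ when $t\ge 6N$.
\item The bounds sum exactly to $\frac{t-r}{N}b_N+b_r$.
\end{itemize}

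One cosmetic point: the step $a_t/t\le b_t/t$ tacitly assumes $C\ge 0$, which the definition does not require. Since $a_t/t=b_t/t-C/t$ and $C/t\to 0$, the conclusion is unaffected.
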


We say that a sequence $\{a_n\}_{n=1}^\infty$ is \emph{eventually almost subadditive} if there are constants $C$ and $N$ such that $a_{m+n}\leq a_m+a_n+C$ for any pair of positive integers $m,n$ satisfying $0.5n\leq m\leq 2n$ and $\min\{m,n\}>N$. It is easy to see that if a sequence is eventually almost subadditive, then it is almost subadditive, and since $\tilde{r}(G,H)>0$ always, we use the following.

\begin{corr}\label{lemfek}
    If $\{a_n\}_{n=1}^\infty$ is eventually almost subadditive and $a_n>0$ for all $n$, then $\lim_{n\to\infty}\frac{a_n}{n}$ exists.
\end{corr}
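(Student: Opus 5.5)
The plan is to reduce Corollary~\ref{lemfek} to Lemma~\ref{lembe}. I will show that an eventually almost subadditive sequence is almost subadditive, possibly with a larger constant. Positivity will then guarantee that the limit given by Lemma~\ref{lembe} is a finite real number.

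\textbf{Step 1: only finitely many pairs are exceptional.} Let $C$ and $N$ be the constants from the definition of eventual almost subadditivity. Consider a pair $(m,n)$ of positive integers with $0.5n\le m\le 2n$ that is not covered by the hypothesis, so $\min\{m,n\}\le N$.
\begin{itemize}
\item If $m\le N$, then $n\le 2m\le 2N$.
\item If $n\le N$, then $m\le 2n\le 2N$.
\end{itemize}
In either case both $m$ and $n$ lie in $\{1,\dots,2N\}$. Hence the set $E$ of exceptional pairs is finite.

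\textbf{Step 2: enlarge the constant.} Define
\[
C'=\max\Bigl\{C,\;0,\;\max_{(m,n)\in E}\bigl(a_{m+n}-a_m-a_n\bigr)\Bigr\}.
\]
This is a finite real number because $E$ is finite. By construction, $a_{m+n}\le a_m+a_n+C'$ for every pair with $0.5n\le m\le 2n$:
\begin{itemize}
\item for pairs in $E$, this holds by the choice of $C'$;
\item for all other pairs, it holds because $C'\ge C$.
\end{itemize}
So $\{a_n\}$ is almost subadditive with constant $C'$. This is the observation the paper calls ``easy to see.''

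\textbf{Step 3: apply Lemma~\ref{lembe} and check finiteness.} Lemma~\ref{lembe} gives
\[
\lim_{n\to\infty}\frac{a_n}{n}=\inf_{n\ge1}\frac{a_n+C'}{n}.
\]
Since $a_n>0$ and $C'\ge0$, every term $(a_n+C')/n$ is positive. The infimum is therefore a real number in $[0,\infty)$ rather than $-\infty$, and it is at most $a_1+C'<\infty$. Hence the limit exists as a real number.

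The only point needing care is Step 1, namely that the failure set is finite. This uses the ratio condition $0.5n\le m\le 2n$ to bound the larger index by twice the smaller one. The positivity hypothesis serves only to rule out a limit of $-\infty$, which is not needed in our applications anyway since $\tilde{r}(G,H)>0$.
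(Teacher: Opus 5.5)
Your proposal is correct and takes the same approach as the paper. The paper only asserts that eventual almost subadditivity implies almost subadditivity, then applies Lemma~\ref{lembe} with positivity of $a_n$ ruling out a limit of $-\infty$; your Steps 1--3 supply the omitted details, namely that all exceptional pairs lie in $\{1,\dots,2N\}^2$, so enlarging the constant to $C'$ absorbs them.
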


In this paper, we show that $\tilde{r}(P_k,P_n)$, $\tilde{r}(P_k,C_n)$, $\tilde{r}(K_{1,k},P_n)$ and $\tilde{r}(K_{1,k},C_n)$ are eventually 
almost subadditive. 

%% file: paths.tex
\section{Preliminary Results}\label{sec:pre}

In this section we prove a set of inequalities by constructing strategies for Builder. These strategies combine to form bigger strategies that prove each of our main results. In our version of the online Ramsey problem, Builder is allowed to choose a previously chosen edge. Note that this does not make Builder any stronger, however, it makes many of the proofs much simpler. We also make little attempt to minimize the additive constants as they make no difference in our main results and would only complicate the arguments. 
Our first inequality is trivial, however, we present it for completeness.

\begin{lemma}\label{lem1}
    Let $H$ be a fixed graph. For any positive integer $n$, 
    \[\tilde{r}(H,P_{n}) \leq\tilde{r}(H,C_{n}).\]
\end{lemma}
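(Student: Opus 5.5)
The plan is a direct strategy-transfer argument: any Builder strategy that wins the game for $(H,C_n)$ also wins the game for $(H,P_n)$ within the same number of rounds. This works because $P_n$ is a subgraph of $C_n$ whenever $C_n$ is defined, i.e. $n\ge 3$.

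First, fix $t=\tilde{r}(H,C_n)$ and a Builder strategy $S$ that guarantees, within $t$ rounds, either a red copy of $H$ or a blue copy of $C_n$, against every Painter. In the $(H,P_n)$ game, Builder simply plays $S$. Painter's responses in this game are just some Painter behaviour in the $(H,C_n)$ game, so the coloured graph after at most $t$ rounds contains a red $H$ or a blue $C_n$.

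In the first case, Builder has already won the $(H,P_n)$ game. In the second case, deleting any one edge of the blue $C_n$ leaves a blue path on the same $n$ vertices. Since the paper's convention is that $P_n$ denotes the path on $n$ vertices, to match $C_n$, this is a blue $P_n$. Hence Builder wins within $t$ rounds, and $\tilde{r}(H,P_n)\le t$.

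Nothing here is hard. The only point needing care is the vertex-counting convention for $P_n$ versus $C_n$. If $P_n$ were instead indexed by edges, the inequality would still hold, because a blue $C_n$ contains a path with $n-1$ edges, and $P_{n-1}$ under that convention is no harder for Builder than $P_n$. For $n\le 2$, where $C_n$ is not a simple graph, the statement is either vacuous or should be read as the convention $\tilde{r}(H,C_n)=\infty$. I would add a brief remark on this and otherwise treat the lemma as immediate.
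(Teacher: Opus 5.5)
Your argument is correct and is the paper's proof: Builder plays the winning $(H,C_n)$ strategy, and a blue $C_n$ contains a blue $P_n$. This uses the vertex-count convention, which is the one the paper uses.

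One small caution concerns your aside about edge-indexing, which runs monotonicity the wrong way. Knowing $\tilde{r}(H,P_{n-1})\le\tilde{r}(H,C_n)$ and $\tilde{r}(H,P_{n-1})\le\tilde{r}(H,P_n)$ gives no upper bound on $\tilde{r}(H,P_n)$. Drop that remark; it is unnecessary under the paper's convention anyway.
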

\begin{proof}
    The proof is trivial since for any fixed graph $H$ and any positive integer $n$, Builder can force either a red $H$ or a blue $C_n$ in at most $\tilde{r}(H,C_{n})$ rounds, and a blue $P_n$ is contained in every blue $C_n$.
\end{proof}

\subsection{Paths vs Paths and Cycles}

We present a strategy for Builder that, starting with a blue $P_n$, either extends it to a blue $P_{n+t}$ or forces a red $P_k$ in at most $2k+2t$ more moves. This strategy is motivated by the strategy in \cite{gry-kie-pra-08} that forces either a red $P_k$ or a blue $P_n$ in at most $2k+2n-7$ rounds. The details are a bit tedious, but Figure \ref{fig:ext} demonstrates the main idea. We refer to a vertex as \emph{isolated} if it is not incident to an edge previously chosen by Builder.

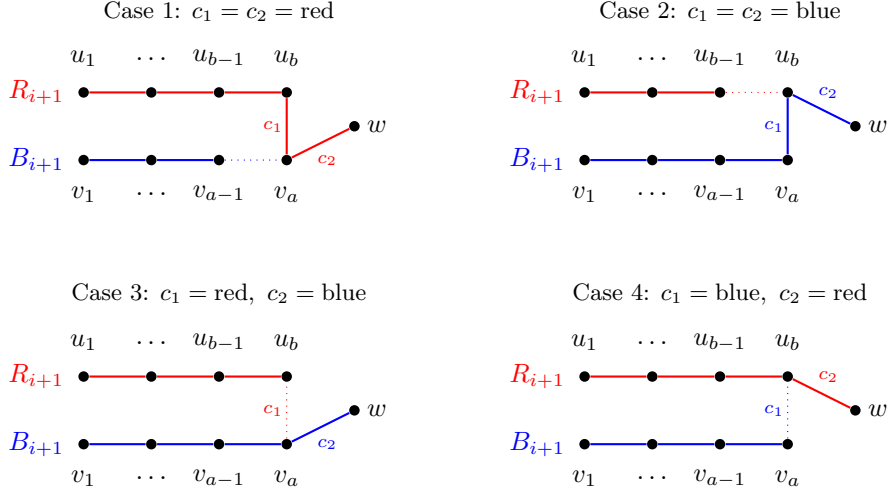
\begin{figure}
\centering
\input{fig1.tikz}
   \caption{A Builder strategy to extend the total length of disjoint red and blue paths by 1, every 2 rounds. Builder joins the paths by choosing edge $v_au_b$, and depending on the color $c_1$ assigned by Painter, Builder either chooses edge $v_aw$ or $u_bw$.} 
   \label{fig:ext}
\end{figure}
    
\begin{lemma}\label{lem2}
    For any positive integers $k$, $n$ and $t$, 
    \[\tilde{r}(P_k,P_{n+t}) \leq\tilde{r}(P_k,P_{n})+2k+2t.\]
\end{lemma}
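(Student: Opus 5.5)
Builder plays in two phases. First, Builder follows an optimal strategy for the $(P_k,P_n)$ game for at most $\tilde{r}(P_k,P_n)$ rounds; if a red $P_k$ appears, Builder is done. Otherwise there is a blue path $B=u_1u_2\cdots u_n$. All edges played so far span finitely many vertices, so in the second phase Builder uses only this blue path and fresh isolated vertices, and ignores every other previously chosen edge. It then suffices to show that, starting from a blue $P_n$, Builder can force a red $P_k$ or a blue $P_{n+t}$ within $2k+2t$ further rounds.

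Following Figure \ref{fig:ext}, Builder maintains a blue path $B$ with endpoint $u_b$ and a vertex-disjoint red path $R$ with endpoint $v_a$. Every other vertex used in the second phase is either isolated or lies on $R$. Initially $R$ is a single isolated vertex, of length $0$. The potential is $\Phi=(\text{number of edges of }B)+(\text{number of edges of }R)$, and each step of two rounds increases $\Phi$ by at least $1$. In a step Builder first chooses $v_au_b$.
\begin{enumerate}[label=(\roman*)]
\item If Painter colors $v_au_b$ blue, the blue path extends through $v_a$. Builder then chooses $v_aw$ for an isolated vertex $w$. If $v_aw$ is blue, $B$ has grown by $2$. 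If $v_aw$ is red, $B$ has grown by $1$ and $w$ becomes a fresh red path of length $0$. The old red path $R$ minus $v_a$ is discarded, so its length is lost; this case needs care, and I handle it below.
\item If Painter colors $v_au_b$ red, Builder chooses $u_bw$ with $w$ isolated. If $u_bw$ is blue, $B$ extends to $w$ and $R$ extends by the red edge $v_au_b$, with new red endpoint $u_b$. The vertex $u_b$ must then leave $B$, so $B$ shortens by one vertex, gains $w$, and its new end is the predecessor of $u_b$. If $u_bw$ is red, the red path becomes $R+u_b+w$, growing by $2$, while $B$ loses $u_b$.
\end{enumerate}

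The main obstacle is getting monotone progress in all four outcomes, especially the case where the red path is discarded. I would repair this by choosing the vertices in the cleverest order. In case (i) with $v_aw$ red, Builder instead keeps the red path by taking $v_a$'s red neighbor as the red endpoint. Alternatively, Builder chooses $wv_a$ first, which mirrors the Grytczuk--Kierstead--Pra\l{}at argument, so that the invariant becomes $|B|+|R|$ nondecreasing by $1$ per two rounds. The accounting I aim for is that red length never exceeds $k-2$ without finishing, and that each red "reset" only loses red length. Red length is therefore bounded by $k$ over the whole phase, so at most $2(t+k)$ rounds suffice before the blue path gains $t$ net edges.

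Concretely, I would prove that every two rounds either blue length grows by at least $1$, or red length grows by at least $1$ while blue length stays at least its current value. Red growth can happen at most $k-1$ times before a red $P_k$ appears. Blue growth is needed $t$ times. This gives $2(k-1+t)\le 2k+2t$ rounds in the second phase, and the total is at most $\tilde{r}(P_k,P_n)+2k+2t$, which proves Lemma \ref{lem2}. Verifying this dichotomy in every outcome of Figure \ref{fig:ext} is the core, tedious step.
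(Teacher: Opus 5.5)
Your framework matches the paper's. After the $(P_k,P_n)$ phase you keep a blue path $B$ and a disjoint red path $R$, join their endpoints by $v_au_b$, and then play to a fresh vertex $w$ from the red endpoint if $v_au_b$ is blue, or from the blue endpoint if it is red. But your updates in the two mixed outcomes are wrong, and these outcomes are the heart of the argument.

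In (i) with $v_aw$ red, do not let $B$ absorb $v_a$. Leave $B$ unchanged and extend $R$ by the red edge $v_aw$. Your repair, where $B$ takes $v_a$ and the red endpoint retreats to its neighbour, makes no net progress: $B$ gains one vertex and $R$ loses one.

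In (ii) with $u_bw$ blue, your update is not a path at all. Once $u_b$ leaves $B$, the vertex $w$ is joined to nothing remaining in $B$, so $B$ cannot ``gain $w$'' while ending at the predecessor of $u_b$. The correct update keeps $u_b$ in $B$, appends $w$, leaves $R$ unchanged, and simply wastes the red edge $v_au_b$.

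With these corrections, the four outcomes are:
\begin{itemize}
\item blue/blue: $B$ gains $v_a$ and $w$, and $R$ loses $v_a$;
\item blue/red: $R$ gains $w$;
\item red/red: $R$ gains $u_b$ and $w$, and $B$ loses $u_b$;
\item red/blue: $B$ gains $w$.
\end{itemize}
Each raises $|V(B)|+|V(R)|$ by at least $1$, provided an emptied path is replaced by a fresh isolated vertex. Neither of your floated alternatives (reordering, or playing $wv_a$ first) is needed.

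Your closing count is also incorrect. The dichotomy ``blue grows, or red grows while blue does not shrink'' fails in the red/red outcome, where $B$ loses $u_b$, as your own case (ii) says. The bound ``red growth happens at most $k-1$ times'' ignores that $R$ loses a vertex in the blue/blue outcome, so the number of red-growth steps is not bounded by $k-1$.

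Use the potential you introduced at the start instead. Initially $|V(B)|+|V(R)|=n+1$, and after $k+t$ steps (that is, $2k+2t$ rounds) it is at least $n+k+t+1$. If there were no red $P_k$ and no blue $P_{n+t}$, it would be at most $(k-1)+(n+t-1)$, a contradiction. This is exactly the paper's proof.
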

\begin{proof}
    Let $k$, $n$ and $t$ be positive integers. We show that Builder has a strategy that forces a red $P_k$ or a blue $P_{n+t}$ in at most $\tilde{r}(P_k,P_{n})+2k+2t$ rounds. 

    Builder first plays the strategy that forces a red $P_k$ or a blue $P_n$ in $N\leq\tilde{r}(P_k,P_{n})$ rounds. If Painter creates a red $P_k$, then Builder wins and we are done, so we may assume that Painter creates a blue $P_n$. Let 
    \[
    B_0=v_1v_2\ldots v_{n-1}v_n
    \]
    be the blue $P_n$, and let $u_1$ be an isolated vertex. Notice that $R_0=u_1$ is a red $P_1$.

    Builder chooses the next $2k+2t$ edges as follows. Suppose in round $N+2i$, Builder has successfully forced two disjoint paths 
    \[B_i=v_1\ldots v_a\]
    and
    \[R_i=u_1\ldots u_b\]
    so that $B_i$ is a blue $P_a$ and $R_i$ is a red $P_b$. In round $N+2i+1$, Builder chooses the edge $v_a u_b$. Let $w$ be an isolated vertex. If Painter colors the edge $v_a u_b$ blue, then in round $N+2i+2$, Builder chooses the edge $u_b w$. If Painter colors $v_a u_b$ red, then in round $N+2i+2$, Builder chooses the edge $v_a w$. Let $c_1$ be the color that Painter chooses in round $N+2i+1$, and let $c_2$ be the color that Painter chooses in round $N+2i+2$. There are a total of four cases for $a>1$ and $b>1$.
    
    \begin{enumerate}
        \item If $c_1=c_2=\text{red}$, then $B_{i+1}=v_1\ldots v_{a-1}$ and $R_{i+1}=u_1\ldots u_b v_a w$.
        \item If $c_1=c_2=\text{blue}$, then $B_{i+1}=v_1\ldots v_{a} u_b w$ and $R_{i+1}=u_1\ldots u_{b-1}$.
        \item If $c_1=\text{red}$ and $c_2=\text{blue}$, then $B_{i+1}=v_1\ldots v_{a} w$ and $R_{i+1}=u_1\ldots u_{b}$.
        \item If $c_1=\text{blue}$ and $c_2=\text{red}$, then $B_{i+1}=v_1\ldots v_{a}$ and $R_{i+1}=u_1\ldots u_{b} w$.
    \end{enumerate}
    Let $x$ be an isolated vertex. If $a=1$, $B_{i+1}=x$ for Case 1, and if $b=1$, $R_{i+1}=x$ for Case 2.
    
    By round $N+2k+2t$, Builder has forced two disjoint paths $R_{k+t}$ and $B_{k+t}$ so that $R_{k+t}$ is a red $P_{a}$ and $B_{k+t}$ is a blue $P_{b}$ with $a+b\geq n+k+t+1$. By the pigeonhole principle, $a\geq k$ or $b\geq n+t$. This completes the proof.
\end{proof}

Our next strategy shows that starting with two disjoint long blue paths, in at most $k$ more rounds, Builder can force a red $P_k$ or connect the two blue paths to form a longer blue path, while losing at most $k$ in total length.

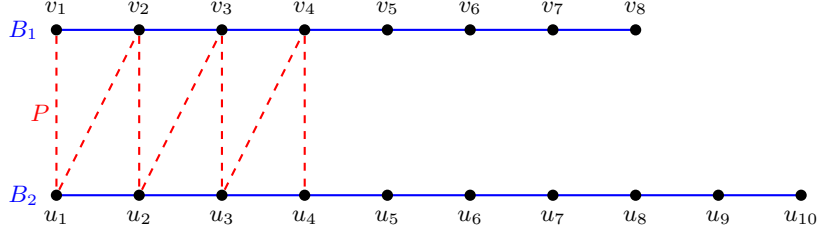
\begin{figure}
        \centering
        \input{fig2.tikz}
        \caption{The Builder strategy in Lemma \ref{lem3} forces Painter to either join the two long blue paths or create a short red path.}
        \label{fig:join}
    \end{figure}
    
\begin{lemma}\label{lem3}
    For any positive integers $k$, $m$ and $n$ satisfying $\min\{m,n\}>k/2$, 
    \[\tilde{r}(P_k,P_{m+n-k}) \leq\tilde{r}(P_k,P_{m})+\tilde{r}(P_k,P_{n})+k.\]  
\end{lemma}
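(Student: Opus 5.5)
We play the two path-forcing strategies one after the other on disjoint sets of vertices. Then we spend at most $k$ further rounds joining the two blue paths by a zig-zag, in which every red answer lengthens a red path.

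\textbf{Phase 1.} Builder plays a strategy witnessing $\tilde{r}(P_k,P_m)$, using only vertices of some infinite set $V_1\subset\mathbb{N}$. After at most $\tilde{r}(P_k,P_m)$ rounds, either there is a red $P_k$ and Builder has won, or there is a blue $P_m$. Passing to a subpath if necessary, call it $A=a_1a_2\ldots a_m$.

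\textbf{Phase 2.} Builder plays a strategy witnessing $\tilde{r}(P_k,P_n)$ on an infinite set $V_2$ disjoint from $V_1$. Painter's earlier colours live on $V_1$ only, so they do not affect this game. After at most $\tilde{r}(P_k,P_n)$ further rounds we have a red $P_k$ or a blue path $B=b_1\ldots b_n$ that is vertex-disjoint from $A$.

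\textbf{Joining phase.} Builder chooses, in order, the edges
\[
a_mb_n,\; b_{n-1}a_m,\; a_{m-1}b_{n-1},\; b_{n-2}a_{m-1},\; a_{m-2}b_{n-2},\;\ldots
\]
At each step, one of the two current endpoints moves one step back along its path, alternating between $B$ and $A$.

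If Painter has coloured all edges red so far, they form the red path $b_na_mb_{n-1}a_{m-1}b_{n-2}\ldots$. After $k-1$ red edges this is a red $P_k$, so there are at most $k-1\le k$ rounds in this phase.

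Suppose instead that the first blue edge is $a_ib_j$. Then $a_1\ldots a_ib_jb_{j-1}\ldots b_1$ is a blue path on $i+j$ vertices. At most $k-2$ red edges preceded it, and each one caused exactly one backward step. Hence $(m-i)+(n-j)\le k-2$, so $i+j\ge m+n-k$. Taking a subpath gives a blue $P_{m+n-k}$.

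\textbf{Checking the indices.} The point needing care is that the indices stay valid, so that $i\ge1$ and $j\ge1$ throughout. Over at most $k-1$ edges, the $A$-side steps back at most about $\lfloor (k-2)/2\rfloor$ times and the $B$-side at most about $\lceil (k-2)/2\rceil$ times. The hypothesis $\min\{m,n\}>k/2$ therefore guarantees we never run off either path. It also guarantees that the red zig-zag uses distinct vertices, so it really is a path.

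\textbf{Total count.} The number of rounds is at most $\tilde{r}(P_k,P_m)+\tilde{r}(P_k,P_n)+k$, which proves the lemma.
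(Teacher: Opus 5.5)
Your proof is correct and follows the paper's argument. You force a blue $P_m$ and a vertex-disjoint blue $P_n$, then zig-zag between the two paths (the paper uses $v_1u_1v_2u_2\ldots$ from the starting ends, you use the mirror image from $a_m,b_n$), so that $k-1$ red answers give a red $P_k$, while an earlier blue edge joins the paths with a loss of fewer than $k$ vertices; your explicit treatment of the disjointness of the two blue paths and of index validity under $\min\{m,n\}>k/2$ fills in details the paper leaves implicit.
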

\begin{proof}
    Let $k$, $m$ and $n$ be positive integers. We show that Builder has a strategy that forces a red $P_k$ or a blue $P_{m+n-k}$ in at most $\tilde{r}(P_k,P_{m})+\tilde{r}(P_k,P_{n})+k$ rounds. 

    Builder first plays the strategy that forces a red $P_k$ or a blue $P_m$ in at most $\tilde{r}(P_k,P_{m})$ rounds. If Painter creates a red $P_k$, then Builder wins and we are done, so we may assume that Painter creates a blue $P_m$. Let 
    \[
    B_1=v_1v_2\ldots v_{m-1}v_m
    \]
    be the blue $P_m$.
    
    Builder then plays the strategy that forces a red $P_k$ or a blue $P_n$ in at most $\tilde{r}(P_k,P_{n})$ rounds. If Painter creates a red $P_k$, then Builder wins and we are done, so we may assume that Painter creates a blue $P_n$. Let 
    \[
    B_2=u_1u_2\ldots u_{n-1}u_n
    \]
    be the blue $P_n$.

    Finally, Builder chooses the edges forming the path
    \[
    P=v_1u_1v_2u_2v_3u_3\ldots 
    \]
    until Painter colors an edge blue. If Painter colors the first $k-1$ edges red, then Builder has forced a red $P_k$ and we are done. 
    Hence, we may assume that Painter colors one of the first $k-1$ edges blue. In that case, this edge connects $B_1$ and $B_2$ forming a blue $P_N$ with $N>m+n-k$. Since any blue $P_N$ contains a blue $P_{m+n-k}$, the proof is complete.
    
\end{proof}


Our last strategy for paths consists of two phases. In the first phase, starting with a long blue path, in at most $k$ rounds, Builder can connect the ends of the two blue paths to form a large blue cycle, while losing at most $k$ in length, or force a red $P_k$. In the second phase, starting with a long blue cycle, Builder can force a red $P_k$, or a blue chord within the large blue cycle forming a blue $C_{n-k}$. Note that if it was sufficient to force a large blue cycle, then showing the existence of the asymptotic value for $\tilde{r}(P_k,C_n)$ would be no harder than showing the existence of the asymptotic value for $\tilde{r}(P_k,P_n)$. However, unlike paths, a cycle does not contain any smaller cycles. It is forcing a blue cycle of a specific size that forces us to be much more meticulous with our construction.

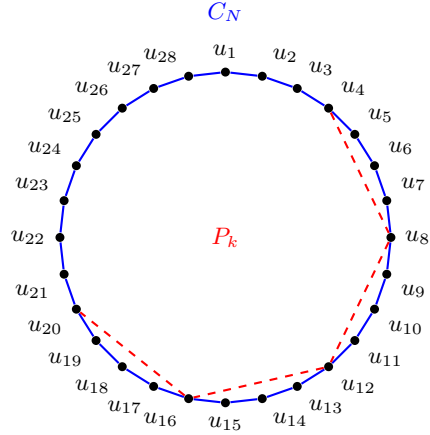
\begin{figure}
        \centering
        \input{fig3.tikz}
        \caption{A Builder strategy forcing Painter to create a blue $C_{n-k}$ or a red $P_k$, starting with a blue $C_N$ where $n-k< N\leq n$. In this example, $n=30$, $k=5$ and $N=28$.}
        \label{figc}
    \end{figure}

\begin{lemma}\label{lem4}
    For any positive integers $k$ and $n$ satisfying $n>k^2$,
    \[\tilde{r}(P_k, C_{n-k})\leq \tilde{r}(P_k, P_n)+2k.\]
\end{lemma}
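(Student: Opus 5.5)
The plan is to have Builder first force a long blue path, then close it into a blue cycle of length $N$ with $n-k<N\le n$, and finally shorten that cycle to exactly $C_{n-k}$ with a single blue chord. Each of the two later phases costs at most $k-1$ rounds and mirrors the join argument of Lemma~\ref{lem3}. For $k=1$ the statement is trivial, since a red $P_1$ is a single vertex, so assume $k\ge 2$.

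\emph{Step 0.} Builder plays an optimal strategy forcing a red $P_k$ or a blue $P_n$ within $\tilde{r}(P_k,P_n)$ rounds. We may assume a blue path $v_1v_2\ldots v_n$ appears.

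\emph{Phase 1 (closing the path).} Builder chooses the edges of the zigzag path
\[ v_1v_n,\; v_nv_2,\; v_2v_{n-1},\; v_{n-1}v_3,\;\ldots \]
one by one, stopping as soon as Painter colors an edge blue. If the first $k-1$ edges are all red, they form a red $P_k$, since $n>k^2$ makes all these vertices distinct. Otherwise some edge $v_iv_j$ with $i<j$ among the first $k-1$ is blue. After $t$ such edges the endpoints used satisfy $(i-1)+(n-j)\le t-1\le k-2$. So the blue edge $v_iv_j$ together with the blue segment $v_i\ldots v_j$ forms a blue cycle $C_N$ with $n-k<N\le n$. This costs at most $k-1$ rounds.

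\emph{Phase 2 (cutting to exact length).} Write the blue cycle as $x_0x_1\ldots x_{N-1}x_0$ and set $d=N-(n-k)+1$. Then $2\le d\le k+1$. Any chord $x_ix_{i+d}$, with indices read mod $N$, closes together with the shorter arc of $d$ cycle edges a cycle of length $N-d+1=n-k$. Builder therefore chooses the chords
\[ x_0x_d,\; x_dx_{2d},\;\ldots,\; x_{(k-2)d}x_{(k-1)d}. \]
If any of them is blue, we obtain a blue $C_{n-k}$. If all $k-1$ are red, they form a red $P_k$, provided the vertices $x_0,x_d,\ldots,x_{(k-1)d}$ are distinct, i.e.\ $(k-1)d<N$.

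\emph{The main check, and where $n>k^2$ is used.} The condition $(k-1)d<N$ is equivalent to $(k-1)(N-n+k+1)-N<0$. The left side is nondecreasing in $N$ since $k\ge 2$, so it suffices to check $N=n$, where it reads $(k-1)(k+1)-n=k^2-1-n<0$. This holds because $n>k^2$. The same hypothesis guarantees the zigzag in Phase 1 never reuses a vertex.

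Red edges from Phase 2 might touch the red edges of Phase 1, but this can only help Builder, so the phases can be analyzed independently. The total is at most $\tilde{r}(P_k,P_n)+2(k-1)\le\tilde{r}(P_k,P_n)+2k$ rounds. I expect the only delicate step to be choosing the chord length $d$ and verifying non-wraparound, as above.
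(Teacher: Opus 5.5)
Your proof is correct and is essentially the paper's argument: the same zigzag $v_1v_nv_2v_{n-1}\ldots$ closes the blue path into a blue $C_N$ with $n-k<N\le n$. Then $k-1$ chords of span $d=N-n+k+1$ (the paper's $\alpha$), laid out as a path, give either a red $P_k$ or a blue chord that cuts the cycle down to exactly $C_{n-k}$.

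One wording fix: the $C_{n-k}$ is the chord together with the complementary arc of $N-d$ cycle edges, not the arc of $d$ edges as you wrote. Also, your mod-$N$ indexing with the sharp check $(k-1)d<N$ is slightly more careful than the paper's labels $u_\alpha,\ldots,u_{k\alpha}$. Those labels overflow $N$ in cases such as $N=n=k^2+1$, where $k\alpha=k^2+k$.
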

\begin{proof}
    Let $k$ and $n$ be positive integers. We show that Builder has a strategy that forces a red $P_k$ or a blue $C_{n-k}$ in at most $\tilde{r}(P_k, P_n)+2k$ rounds. 
    
    Builder first plays the strategy that forces a red $P_k$ or a blue $P_n$ in at most $\tilde{r}(P_k,P_{n})$ rounds. If Painter creates a red $P_k$, then Builder wins and we are done, so we may assume that Painter creates a blue $P_n$. Let 
    \[
    B=v_1v_2\ldots v_{n-1}v_n
    \]
    be the blue $P_n$.      

    Next, Builder chooses the edges forming the path
    \[
    P'=v_1v_nv_2v_{n-1}v_3v_{n-2}\ldots 
    \]
    until Painter colors an edge blue. If Painter colors the first $k-1$ edges red, then Builder has forced a red $P_k$, and we are done. 
    So we may assume that Painter colors one of the first $k-1$ edges blue.
    Hence, we may assume that there is an edge in $P'$ that was colored blue. In that case, this edge connects $B$ to itself forming a blue $C_N$ with $n-k< N\leq n$. 
    Let $\alpha=N+k-n+1$ and let
    \[
    C=u_1u_2\ldots u_{N-1}u_Nu_1
    \]
    be the blue $C_N$.
    
    Finally, Builder chooses the edges forming the path
    \[
    P=u_{\alpha}u_{2\alpha}\ldots u_{k\alpha}.
    \]
    If Painter colors every edge in $P$ red, then $P$ is a red $P_k$, and we are done. Hence, we may assume that there is an edge in $P$ that was colored blue. In that case, this edge is a chord in $C$ which forms a blue $C_{n-k}$ (See Figure \ref{figc}.) In particular, if edge $u_{(i-1)\alpha}u_{i\alpha}$ is colored blue for some $i\leq k$, then \[
        u_{(i-1)\alpha} u_{i\alpha} u_{i\alpha+1}\ldots u_{N-1} u_N u_1 u_2 \ldots u_{(i-1)\alpha} 
    \]
    is a blue $C_{n-k}$.
\end{proof}

%% file: fig1.tikz
\begin{tikzpicture}[
    scale=0.9,
    every node/.style={circle, fill, inner sep=1.4pt},
    lab/.style={draw=none, fill=none, rectangle},
    >=stealth
]

\begin{scope}[xshift=0cm, yshift=0cm]
    \node[lab,above] at (2,2.0) {\small Case 1: $c_1=c_2=\mathrm{red}$};

    \foreach \i in {0,1,2,3}{
        \node (u\i) at (\i,1) {};
        \node (v\i) at (\i,0) {};
    }
    \node[lab] at (0,1.5) {$u_1$};
    \node[lab] at (2,1.5) {$u_{b-1}$};
    \node[lab] at (1,1.5) {$\ldots$};
    \node[lab] at (3,1.5) {$u_b$};
    \node[lab] at (0,-0.5) {$v_1$};
    \node[lab] at (2,-0.5) {$v_{a-1}$};
    \node[lab] at (1,-0.5) {$\ldots$};
    \node[lab] at (3,-0.5) {$v_a$};
   
    \draw[red, thick] (u0)--(u1)--(u2)--(u3);

    \draw[blue, thick] (v0)--(v1)--(v2);
    \draw[blue,dotted] (v2)--(v3);

    \node[label=right:$w$] (w) at (4,0.5) {};
    \draw[red, thick] (v3)--(u3);
    \draw[red, thick] (v3)--(w);

    \node[lab,red] at (2.8,0.5) {$\scriptstyle c_1$};
    \node[lab,red] at (3.6,0) {$\scriptstyle c_2$};
    \node[lab,red] at (-0.7,1) {$R_{i+1}$};
    \node[lab,blue] at (-0.7,0) {$B_{i+1}$};

\end{scope}

\begin{scope}[xshift=7.4cm, yshift=0cm]
    \node[lab,above] at (2,2.0) {\small Case 2: $c_1=c_2=\mathrm{blue}$};

    \foreach \i in {0,1,2,3}{
        \node (u\i) at (\i,1) {};
        \node (v\i) at (\i,0) {};
    }
    \node[lab] at (0,1.5) {$u_1$};
    \node[lab] at (2,1.5) {$u_{b-1}$};
    \node[lab] at (1,1.5) {$\ldots$};
    \node[lab] at (3,1.5) {$u_b$};
    \node[lab] at (0,-0.5) {$v_1$};
    \node[lab] at (2,-0.5) {$v_{a-1}$};
    \node[lab] at (1,-0.5) {$\ldots$};
    \node[lab] at (3,-0.5) {$v_a$};
   
    \draw[red, thick] (u0)--(u1)--(u2);
    \draw[red,dotted] (u2)--(u3);

    \draw[blue, thick] (v0)--(v1)--(v2)--(v3);

    \node[label=right:$w$] (w) at (4,0.5) {};
    \draw[blue, thick] (v3)--(u3);
    \draw[blue, thick] (u3)--(w);

    \node[lab,blue] at (2.8,0.5) {$\scriptstyle c_1$};
    \node[lab,blue] at (3.6,1) {$\scriptstyle c_2$};

    \node[lab,red] at (-0.7,1) {$R_{i+1}$};
    \node[lab,blue] at (-0.7,0) {$B_{i+1}$};

\end{scope}

\begin{scope}[xshift=0cm, yshift=-4.2cm]
    \node[lab,above] at (2,2.0) {\small Case 3: $c_1=\mathrm{red},\ c_2=\mathrm{blue}$};

    \foreach \i in {0,1,2,3}{
        \node (u\i) at (\i,1) {};
        \node (v\i) at (\i,0) {};
    }
    \node[lab] at (0,1.5) {$u_1$};
    \node[lab] at (2,1.5) {$u_{b-1}$};
    \node[lab] at (1,1.5) {$\ldots$};
    \node[lab] at (3,1.5) {$u_b$};
    \node[lab] at (0,-0.5) {$v_1$};
    \node[lab] at (2,-0.5) {$v_{a-1}$};
    \node[lab] at (1,-0.5) {$\ldots$};
    \node[lab] at (3,-0.5) {$v_a$};
   
    \draw[red, thick] (u0)--(u1)--(u2)--(u3);

    \draw[blue, thick] (v0)--(v1)--(v2)--(v3);

    \node[label=right:$w$] (w) at (4,0.5) {};
    \draw[red,dotted] (v3)--(u3);
    \draw[blue, thick] (v3)--(w);

    \node[lab,red] at (2.8,0.5) {$\scriptstyle c_1$};
    \node[lab,blue] at (3.6,0) {$\scriptstyle c_2$};

    \node[lab,red] at (-0.7,1) {$R_{i+1}$};
    \node[lab,blue] at (-0.7,0) {$B_{i+1}$};
    
\end{scope}

\begin{scope}[xshift=7.4cm, yshift=-4.2cm]
    \node[lab,above] at (2,2.0) {\small Case 4: $c_1=\mathrm{blue},\ c_2=\mathrm{red}$};

    \foreach \i in {0,1,2,3}{
        \node (u\i) at (\i,1) {};
        \node (v\i) at (\i,0) {};
    }
    \node[lab] at (0,1.5) {$u_1$};
    \node[lab] at (2,1.5) {$u_{b-1}$};
    \node[lab] at (1,1.5) {$\ldots$};
    \node[lab] at (3,1.5) {$u_b$};
    \node[lab] at (0,-0.5) {$v_1$};
    \node[lab] at (2,-0.5) {$v_{a-1}$};
    \node[lab] at (1,-0.5) {$\ldots$};
    \node[lab] at (3,-0.5) {$v_a$};
   
    \draw[red, thick] (u0)--(u1)--(u2)--(u3);

    \draw[blue, thick] (v0)--(v1)--(v2)--(v3);

    \node[label=right:$w$] (w) at (4,0.5) {};
    \draw[blue,dotted] (v3)--(u3);
    \draw[red, thick] (u3)--(w);

    \node[lab,blue] at (2.8,0.5) {$\scriptstyle c_1$};
    \node[lab,red] at (3.6,1) {$\scriptstyle c_2$};

    \node[lab,red] at (-0.7,1) {$R_{i+1}$};
    \node[lab,blue] at (-0.7,0) {$B_{i+1}$};

\end{scope}

\end{tikzpicture}

%% file: fig2.tikz
\begin{tikzpicture}[scale=1.1, every node/.style={font=\small}]
    \def\m{8}
    \def\n{10}
    \def\kprime{4}

    \draw[blue,thick] (1,2) -- (\m,2);
    \foreach \i in {1,...,\m} {
        \node[circle, fill=black, inner sep=1.5pt, label=above:$v_{\i}$] (v\i) at (\i,2) {};
    }

    \draw[blue,thick] (1,0) -- (\n,0);
    \foreach \i in {1,...,\n} {
        \node[circle, fill=black, inner sep=1.5pt, label=below:$u_{\i}$] (u\i) at (\i,0) {};
    }

    \node[blue] at (0.6,2) {$B_1$};
    \node[blue] at (0.6, 0) {$B_2$};

    \draw[red, thick, dashed] (v1) -- (u1) -- (v2) -- (u2) -- (v3) -- (u3) -- (v4) -- (u4);

    \node[red] at (.8,1) {$P$};
\end{tikzpicture}

%% file: fig3.tikz
\begin{tikzpicture}[scale=1, every node/.style={font=\small}]
    \def\N{28}          
    \def\nval{40}        
    \def\alpha{5}       
    \def\kmone{4}       

    \def\r{2.2}

    \foreach \i in {1,...,\N} {
        \pgfmathsetmacro{\angle}{90 - (\i-1)*360/\N}
        \node[circle, fill=black, inner sep=1.2pt,
              label={\angle:$u_{\i}$}] (u\i)
              at ({\r*cos(\angle)}, {\r*sin(\angle)}) {};
    }

    \node[blue] at (0,3) {$C_N$};
    \node[red] at (0,0) {$P_k$};
    
    \foreach \i in {1,...,\N} {
        \pgfmathtruncatemacro{\j}{mod(\i,\N) + 1}
        \draw[blue, thick] (u\i) -- (u\j);}


    %

    \draw[red, thick, dashed]
        (u4) -- (u8) -- (u12) -- (u16) -- (u20);

\end{tikzpicture}

%% file: stars.tex
\subsection{Stars vs Paths and Cycles}

We present a Builder strategy that, starting with a blue $P_n$, either extends the blue $P_n$ to a blue $P_{n+1}$ or forces a red $K_{1,k}$ in at most $k$ more moves. Figure \ref{fig:ext2} illustrates the approach. 

\begin{figure}
        \centering
        \input{fig4.tikz}
        \caption{The Builder strategy in Lemma \ref{lem2.1} forces Painter to either create a red star or extend a blue path by 1.}
        \label{fig:ext2}
    \end{figure}
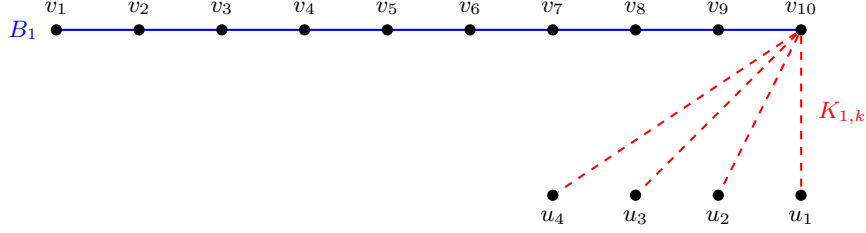
    
\begin{lemma}\label{lem2.1}
    For any positive integers $k$ and $n$, 
    \[\tilde{r}(K_{1,k},P_{n+1}) \leq\tilde{r}(K_{1,k},P_{n})+k.\]
\end{lemma}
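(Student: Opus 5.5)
Builder first plays an optimal strategy for the game $(K_{1,k},P_n)$. This strategy forces a red $K_{1,k}$ or a blue $P_n$ in at most $\tilde{r}(K_{1,k},P_n)$ rounds. If a red $K_{1,k}$ appears, Builder has won. So we may assume Painter has created a blue path $B=v_1v_2\ldots v_n$.

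Builder then spends at most $k$ further rounds on the endpoint $v_n$. He picks isolated vertices $w_1,w_2,\ldots,w_k$. These are untouched by any earlier edge, which is possible since the board is $K_\mathbb{N}$ and only finitely many edges have been chosen. In round $j$ he chooses the edge $v_nw_j$, and he stops as soon as Painter colors one of these edges blue.

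There are two outcomes.
\begin{enumerate}
\item Suppose Painter colors $v_nw_j$ blue for some $j\le k$. Then $v_1v_2\ldots v_nw_j$ is a blue path. It has $n+1$ distinct vertices because $w_j$ was isolated and therefore does not lie on $B$. So Builder has a blue $P_{n+1}$ within $\tilde{r}(K_{1,k},P_n)+j\le \tilde{r}(K_{1,k},P_n)+k$ rounds.
\item Suppose Painter colors all $k$ edges $v_nw_1,\ldots,v_nw_k$ red. These edges form a red star centered at $v_n$ with $k$ distinct leaves, that is, a red $K_{1,k}$, again within the stated number of rounds.
\end{enumerate}
In either case the inequality follows.

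There is no real obstacle here. The only points to check are that the new leaves $w_j$ are genuinely fresh, so the extended path does not revisit a vertex and the star has $k$ distinct leaves, and that the round count adds up to exactly the bound. The key observation is that, in contrast to Lemma \ref{lem2}, a red star needs no long red structure to be built. Every red answer at the endpoint of the blue path directly grows a single red star, so a blue extension costs at most $k$ rounds rather than an amount that grows with the extension length.
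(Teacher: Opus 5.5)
Your proof is correct and is essentially the paper's argument: after the blue $P_n$ appears, Builder joins the endpoint $v_n$ to $k$ isolated vertices. Either some such edge is blue and extends the path to a blue $P_{n+1}$, or all $k$ are red and form a red $K_{1,k}$ centered at $v_n$; the only cosmetic difference is that you stop at the first blue edge, while the paper plays all $k$ edges.
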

\begin{proof}
    Let $k$, and $n$ be positive integers. We show that Builder has a strategy that forces a red $K_{1,k}$ or a blue $P_{n+1}$ in at most $\tilde{r}(K_{1,k},P_{n})+k$ rounds. 

    Builder first plays the strategy that forces a red $K_{1,k}$ or a blue $P_n$ in $N\leq\tilde{r}(K_{1,k},P_{n})$ rounds. If Painter creates a red $K_{1,k}$, then Builder wins and we are done, so we may assume that Painter creates a blue $P_n$. Let 
    \[
    B_0=v_1v_2\ldots v_{n-1}v_n
    \]
    be the blue $P_n$, and let $\{u_1,\ldots,u_k\}$ be a set of isolated vertices. 

    Builder chooses the next $k$ edges as follows. For $i\in\{1,\ldots,k\}$, Builder chooses the edge $v_nu_i$. If Painter colors all $k$ edges red, then Builder forced a red $K_{1,k}$. Hence, we may assume that $v_nu_j$ was colored blue for some $j\in\{1,\ldots,k\}$. Then 
    \[B=v_1v_2\ldots v_nu_j\]
    is a blue $P_{n+1}$.
    
\end{proof}

Repeatedly applying Lemma \ref{lem2.1} provides us with the following.

\begin{lemma}\label{lem2.2}
    For any positive integers $k$, $n$ and $t$, 
    \[\tilde{r}(K_{1,k},P_{n+t}) \leq\tilde{r}(K_{1,k},P_{n})+tk.\]
\end{lemma}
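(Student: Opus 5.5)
The plan is to prove Lemma \ref{lem2.2} by induction on $t$, applying Lemma \ref{lem2.1} once at each step. Fix positive integers $k$ and $n$. For the base case $t=1$, the inequality is exactly Lemma \ref{lem2.1}.

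For the inductive step, assume that for some $t\geq 1$
\[\tilde{r}(K_{1,k},P_{n+t}) \leq \tilde{r}(K_{1,k},P_{n})+tk.\]
Lemma \ref{lem2.1} holds for every positive integer in place of $n$. Applying it with $n+t$ in place of $n$ gives
\[\tilde{r}(K_{1,k},P_{n+t+1}) \leq \tilde{r}(K_{1,k},P_{n+t})+k.\]
Combining this with the inductive hypothesis yields
\[\tilde{r}(K_{1,k},P_{n+t+1}) \leq \tilde{r}(K_{1,k},P_{n})+(t+1)k,\]
which closes the induction. Equivalently, one can write the telescoping sum $\sum_{j=0}^{t-1}\bigl(\tilde{r}(K_{1,k},P_{n+j+1})-\tilde{r}(K_{1,k},P_{n+j})\bigr)\leq tk$, since each summand is at most $k$ by Lemma \ref{lem2.1}.

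There is no real obstacle here. The only point to check is that Lemma \ref{lem2.1} was stated for all positive integers $n$, with no lower bound on $n$ in terms of $k$, so it can be applied at every intermediate length $n+j$. This is indeed the case, because the extension strategy in its proof only needs the endpoint $v_n$ of the blue path and $k$ fresh isolated vertices.
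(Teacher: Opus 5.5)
Your proof is correct and follows the paper's approach: the paper obtains Lemma \ref{lem2.2} by repeatedly applying Lemma \ref{lem2.1}, and your induction on $t$ (equivalently, your telescoping sum) writes that iteration out explicitly. You also correctly check that Lemma \ref{lem2.1} carries no lower bound on $n$, so it can be applied at every intermediate length $n+j$.
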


Our next strategy shows that if Builder can force two long blue paths, then in at most $k$ more rounds, Builder can force a red $K_{1,k}$ or connect the two blue paths to form a longer blue path, while losing at most $k-1$ in total size.
    
\begin{lemma}\label{lem2.4}
    For any positive integers $k$, $m$ and $n$ satisfying $m\geq k$, 
    \[\tilde{r}(K_{1,k},P_{m+n-k}) \leq\tilde{r}(K_{1,k},P_{m})+\tilde{r}(K_{1,k},P_{n})+k.\]  
\end{lemma}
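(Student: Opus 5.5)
We follow the template of Lemma \ref{lem3}, replacing the zigzag path $P$ with a red star centered at the end of the first blue path. First, Builder plays the strategy that forces a red $K_{1,k}$ or a blue $P_m$ in at most $\tilde{r}(K_{1,k},P_m)$ rounds. If a red $K_{1,k}$ appears, Builder wins, so assume there is a blue path $B_1=v_1v_2\ldots v_m$. Next, Builder plays the strategy that forces a red $K_{1,k}$ or a blue $P_n$ in at most $\tilde{r}(K_{1,k},P_n)$ rounds. Builder plays this second strategy on vertices disjoint from those already used, which is possible since $K_\mathbb{N}$ is infinite and the strategy only refers to the structure of the edges it chooses. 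Again we may assume a blue path $B_2=u_1u_2\ldots u_n$ appears, vertex-disjoint from $B_1$.

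In the final phase, Builder chooses the edges $v_mu_1,v_mu_2,\ldots,v_mu_k$ one at a time. There are two cases.
\begin{enumerate}
    \item If Painter colors all $k$ of these edges red, they form a red $K_{1,k}$ centered at $v_m$, and Builder wins.
    \item Otherwise, some edge $v_mu_j$ with $j\le k$ is blue. Then
    \[
    v_1v_2\ldots v_mu_ju_{j+1}\ldots u_n
    \]
    is a blue path on $m+n-j+1\geq m+n-k+1$ vertices. In particular, it contains a blue $P_{m+n-k}$.
\end{enumerate}
The total number of rounds is at most $\tilde{r}(K_{1,k},P_m)+\tilde{r}(K_{1,k},P_n)+k$, as claimed. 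Builder may stop as soon as a blue edge appears, or may simply continue, since extra rounds only help.

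The one point needing care is that the vertices $u_1,\ldots,u_k$ actually exist, that is, $n\ge k$. The hypothesis only guarantees $m\ge k$, so if $n<k$ we use the symmetric version: Builder centers the star at $u_n$ and chooses the edges $u_nv_1,\ldots,u_nv_k$ instead, which is possible because $m\ge k$. A blue edge $u_nv_j$ then yields the blue path $u_1\ldots u_nv_jv_{j+1}\ldots v_m$, which has at least $m+n-k+1$ vertices. Alternatively, when $n<k$ we have $m+n-k<m$, so the first phase alone already gives a blue $P_{m+n-k}$ and the inequality is immediate. This case split is the main point to get right; the rest follows the pattern of Lemma \ref{lem3} directly.

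We also check that the second strategy cannot produce a monochromatic structure that interferes with the argument. Any red $K_{1,k}$ appearing at any stage is a win for Builder. Blue edges appearing elsewhere are irrelevant, since we only need the two disjoint blue paths together with the connecting blue edge.
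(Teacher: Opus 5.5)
Your proof is correct and is essentially the paper's argument: you join the two vertex-disjoint blue paths with $k$ edges from an endpoint of one path to the first $k$ vertices of the other. The paper centers the star at the endpoint $u_1$ of $B_2$ with leaves $v_1,\ldots,v_k$ of $B_1$, which is your ``symmetric version''; since that version needs only $m\ge k$, it works for every $n$, so your case split on $n$ versus $k$ is unnecessary.
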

\begin{proof}
    Let $k$, $m$ and $n$ be positive integers. We show that Builder has a strategy that forces a red $K_{1,k}$ or a blue $P_{m+n-k}$ in at most $\tilde{r}(K_{1,k},P_{m})+\tilde{r}(K_{1,k},P_{n})+k$ rounds. 

    Builder first plays the strategy that forces a red $K_{1,k}$ or a blue $P_m$ in at most $\tilde{r}(K_{1,k},P_{m})$ rounds. If Painter creates a red $K_{1,k}$, then Builder wins and we are done, so we may assume that Painter creates a blue $P_m$. Let 
    \[
    B_1=v_1v_2\ldots v_{m-1}v_m
    \]
    be the blue $P_m$.
    
    Builder then plays the strategy that forces a red $K_{1,k}$ or a blue $P_n$ in at most $\tilde{r}(K_{1,k},P_{n})$ rounds. If Painter creates a red $K_{1,k}$, then Builder wins and we are done, so we may assume that Painter creates a blue $P_n$. Let 
    \[
    B_2=u_1u_2\ldots u_{n-1}u_n
    \]
    be the blue $P_n$.

    Finally, Builder chooses the edge $u_1v_i$ for each $i\leq k$. If Painter colors all $k$ edges red, then Builder has forced a red $K_{1,k}$ and we are done. 
    So we may assume that Painter colors one of the $k$ edges blue. Suppose that $u_1v_j$ was colors blue for some $j\leq k$. Then
    \[
    B=u_nu_{n-1}\ldots u_2u_1v_jv_{j+1}\ldots v_{m-1}v_m
    \]
    is a blue $P_{m+n-j+1}$. Since $j\leq k$, $B$ contains a blue $P_{m+n-k}$.
    \begin{figure}
        \centering
        \input{fig5.tikz}
        \caption{The Builder strategy in Lemma \ref{lem2.4} forces Painter to either join two long blue paths or create a small red star.}
        \label{fig:join}
    \end{figure}
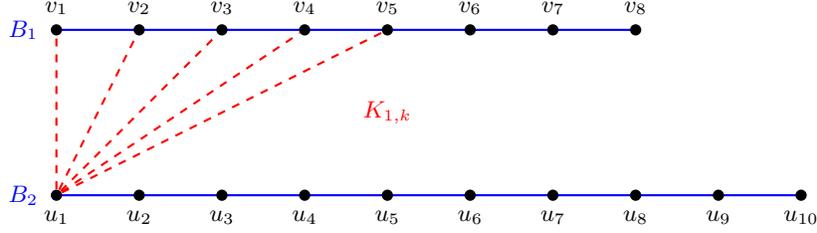
    
\end{proof}


Our last strategy for stars consists of two phases. In the first phase, we show that starting with a long blue path, in at most $k$ more rounds, Builder can connect the ends of the blue path to form a big blue cycle, while losing at most $k-1$ in size, or force a red $K_{1,k}$. In the second phase, starting with one big blue cycle, Builder can force a red $K_{1,k}$ or two blue chords within the big blue cycle forming a blue $C_{n-2k}$. 

\begin{lemma}\label{lem2.5}
    For any positive integers $k$ and $n$ satisfying $n\geq 3k+2$,
    \[\tilde{r}(K_{1,k}, C_{n-2k})\leq \tilde{r}(K_{1,k}, P_n)+3k.\]
\end{lemma}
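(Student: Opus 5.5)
The plan is to imitate Lemma~\ref{lem4}, replacing ``path of probes'' by ``star of probes'' so that every red edge Painter produces lands at a bounded number of centres. Builder first plays the optimal strategy for $\tilde{r}(K_{1,k},P_n)$. As usual, a red $K_{1,k}$ ends the game, so we may assume a blue $P_n=v_1v_2\ldots v_n$ appears.

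\emph{Phase 1 (closing the path, $k$ rounds).} Builder chooses $v_nv_1,v_nv_2,\ldots,v_nv_k$ in turn, stopping at the first blue edge.
\begin{itemize}
\item If all $k$ edges are red, they form a red $K_{1,k}$ centred at $v_n$.
\item Otherwise some $v_nv_j$ with $j\le k$ is blue. Then $v_jv_{j+1}\ldots v_nv_j$ is a blue $C_N$ with $N=n-j+1$, so $n-k+1\le N\le n$.
\end{itemize}
Relabel this cycle as $C=u_1u_2\ldots u_Nu_1$. The task is now to remove exactly $d:=N-(n-2k)$ vertices, where $k+1\le d\le 2k$ and $d$ is known to Builder. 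The hypothesis $n\ge 3k+2$ should be exactly what guarantees $N$ is large enough for all the indices below to be distinct and in range.

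\emph{Phase 2 (two crossing chords, $2k$ rounds).} A single star of chords cannot work, because Painter chooses which leaf is blue and each leaf gives a different cycle length. Instead I would use two centres and crossing chords. For $a<c<b<e$ on $C$, blue chords $u_au_b$ and $u_cu_e$ give the blue cycle
\[
u_a u_{a+1}\ldots u_c\, u_e u_{e-1}\ldots u_b\, u_a ,
\]
which omits $(b-c-1)+(N-e+a-1)$ vertices.

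Fix two centres $u_a,u_c$ and families of leaves $b_i=b_1+i-1$ and $e_i=e_1+i-1$, chosen so that each pair $(u_au_{b_i},u_cu_{e_i})$ omits exactly $d$ vertices. Since $e_i-b_i$ is constant, it suffices to fix $b_1,e_1$ with
\[
(b_1-c-1)+(N-e_1+a-1)=d .
\]
Builder then probes adaptively. He asks $u_au_{b_i}$ for $i=1,2,\ldots$ until one is blue. Once some $u_au_{b_i}$ is blue, he asks the matching $u_cu_{e_i}$. If that is red, he returns to asking the next $a$-edge, and he always looks for a matching pair among all blue $a$-chords found so far. Every unsuccessful probe adds a red edge at $u_a$ or at $u_c$. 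Neither centre may receive $k$ red edges, so the process must terminate with a matching blue pair, and hence a blue $C_{n-2k}$, or else a red $K_{1,k}$ appears.

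\emph{Main obstacle.} The delicate step is the bookkeeping that keeps Phase~2 within $2k$ rounds. A naive count, with up to $k-1$ red edges at $u_a$ and each blue $a$-chord followed by a failed $c$-probe, gives about $3k$ probes, not $2k$. To fix this I would first try to make the two probe sequences share information more efficiently. One option is to choose the leaf sets so that a blue $c$-chord can also be matched with an \emph{earlier} blue $a$-chord, letting Painter's red answers at $u_c$ shift the index in Builder's favour. Another is to merge Phase~1 into the scheme by taking $u_a=v_n$, so that the red edges already created at $v_n$ count against Painter. If neither works, I would check whether the lemma's constant $3k$ can be achieved by a careful choice of $a,c,b_1,e_1$ depending on $d$ and the Phase~1 outcome $j$, which the paper's bound $n\ge 3k+2$ suggests is the intended route.
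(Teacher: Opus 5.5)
Your argument does not reach the constant $3k$. In your adaptive Phase~2, Painter can colour $k-1$ of the $u_a$-probes red. On $k-1$ further occasions Painter can colour a $u_a$-probe blue and the matching $u_c$-probe red. That is $3k-3$ rounds without a red star, so Phase~2 can last $3k-1$ rounds. Adding the $k$ rounds of Phase~1 gives only $\tilde{r}(K_{1,k},C_{n-2k})\le\tilde{r}(K_{1,k},P_n)+4k-1$. You list possible repairs but check none of them, so the lemma is not proved.

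The missing idea is exactly the one you rule out. A single star \emph{does} work if the cycle is closed by \emph{two} blue chords from the same centre rather than by one. Blue chords $u_1u_x$ and $u_1u_y$ with $x<y$ close the blue cycle $u_1u_xu_{x+1}\ldots u_yu_1$, whose length $y-x+2$ depends neither on $N$ nor on which pair Painter lets through. The paper has Builder choose, non-adaptively, the $2k$ chords $u_1u_{i+2}$ and $u_1u_{n-2k+i}$ for $1\le i\le k$.
\begin{itemize}
\item If both chords of some pair $j$ are blue, then $u_1u_{j+2}u_{j+3}\ldots u_{n-2k+j}u_1$ is a blue $C_{n-2k}$. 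The arc stays inside $C$ because $n-k<N$.
\item Otherwise every pair contains a red chord at $u_1$. These $k$ red leaves are distinct because $k+2<n-2k+1$, which is exactly where $n\ge 3k+2$ is used.
\end{itemize}
This costs $2k$ rounds, so $3k$ in total. Your crossing-chord cycle is the case $c\neq a$ of the same construction; taking $c=a$ removes the need for any adaptivity.

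Your own scheme can also be rescued, essentially through your ``merge'' option. The cycle $u_au_{a+1}\ldots u_cu_eu_{e-1}\ldots u_bu_a$ has length $(c-a+1)+(e-b+1)$, so $N$ cancels. It uses only two arcs that avoid the closing edge, so it already exists inside the blue $P_n$. You can therefore drop Phase~1 and run your adaptive probing directly on $v_1\ldots v_n$. Use centres $v_1$ and $v_2$, and leaves $v_{b_i}$ and $v_{e_i}$ with $b_i=i+2$ and $e_i=b_i+n-2k-3$ for $i\le 2k-1$. This gives the blue cycle $v_1v_2v_{e_i}v_{e_i-1}\ldots v_{b_i}v_1$ of length $n-2k$ and takes at most $3k-1$ rounds, which suffices. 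The same observation shows that the closing phase in the paper's proof is not actually needed either, since its cycle uses only the segment $u_{j+2}\ldots u_{n-2k+j}$.
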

\begin{proof}
    Let $k$ and $n$ be positive integers. We show that Builder has a strategy that forces a red $K_{1,k}$ or a blue $C_{n-2k}$ in at most $\tilde{r}(K_{1,k}, P_n)+3k$ rounds. 
    
    Builder first plays the strategy that forces a red $K_{1,k}$ or a blue $P_n$ in at most $\tilde{r}(K_{1,k},P_{n})$ rounds. If Painter creates a red $K_{1,k}$, then Builder wins and we are done, so we may assume that Painter creates a blue $P_n$. Let 
    \[
    B=v_1v_2\ldots v_{n-1}v_n
    \]
    be the blue $P_n$.      

    Next, Builder chooses the edge $v_iv_n$ for each $i\in\{1,\ldots,k\}$.
    If Painter colors every edge red, then Builder has forced a red $K_{1,k}$, and we are done. 
    So, we may assume that Painter colors one edge blue.
    In that case, this edge connects $B$ to itself forming a blue $C_N$ with $n-k+1\leq N\leq n$. 
    Let
    \[
    C=u_1u_2\ldots u_{N-1}u_Nu_1
    \]
    be the blue $C_N$.
    
    Finally, Builder chooses the edges $u_1u_{i+2}$ and $u_1u_{n-2k+i}$ for each $i\in\{1,\ldots,k\}$.
    Let $j$ be an integer such that $1\le j\le k$. If Painter colors both $u_1u_{j+2}$ and $u_1u_{n-2k+j}$ blue, then 
    \[
        u_1u_{j+2}u_{j+3}\ldots u_{n-2k+j-1}u_{n-2k+j}u_1
    \]
    is a blue $C_{n-2k}$. Therefore, for each $i\in\{1,\ldots,k\}$, either $u_1u_{i+2}$ or $u_1u_{n-2k+i}$ is colored red by Painter, but this means that Painter created a red $K_{1,k}$. This completes the proof. 

    \begin{figure}
        \centering
        \input{fig6.tikz}
        \caption{The Builder strategy in Lemma \ref{lem2.5} forces Painter to create a blue $C_{n-2k}$ or a red $K_{1,k}$, starting with a blue $C_N$ where $n-k< N\leq n$. In this example, $n=20$, $k=5$ and $N=16$.}
        \label{fig:placeholder}
    \end{figure}
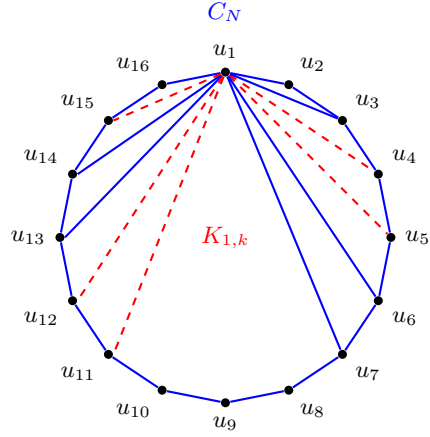
    \label{fig:cycle}
\end{proof}

%% file: fig4.tikz
\begin{tikzpicture}[scale=1.1, every node/.style={font=\small}]
    \def\m{10}
    \def\n{10}
    \def\kprime{4}

    \draw[blue,thick] (1,2) -- (\m,2);
    \foreach \i in {1,...,\m} {
        \node[circle, fill=black, inner sep=1.5pt, label=above:$v_{\i}$] (v\i) at (\i,2) {};
    }
    \foreach \i in {1,...,4} {
    \pgfmathtruncatemacro{\n}{\m-\i+1}
    \node[circle, fill=black, inner sep=1.5pt, label=below:$u_{\i}$] (u\n) at (\n,0) {};
    \draw[red, thick, dashed] (v\m) -- (u\n);
}

    \node[blue] at (0.6,2) {$B_1$};

    \node[red] at (\m+0.5,1) {$K_{1,k}$};
\end{tikzpicture}

%% file: fig5.tikz
\begin{tikzpicture}[scale=1.1, every node/.style={font=\small}]
    \def\m{8}
    \def\n{10}
    \def\kprime{4}

    \draw[blue,thick] (1,2) -- (\m,2);
    \foreach \i in {1,...,\m} {
        \node[circle, fill=black, inner sep=1.5pt, label=above:$v_{\i}$] (v\i) at (\i,2) {};
    }

    \draw[blue,thick] (1,0) -- (\n,0);
    \foreach \i in {1,...,\n} {
        \node[circle, fill=black, inner sep=1.5pt, label=below:$u_{\i}$] (u\i) at (\i,0) {};
    }

    \node[blue] at (0.6,2) {$B_1$};
    \node[blue] at (0.6, 0) {$B_2$};

    \foreach \i in {1,...,5}{
        \draw[red, thick, dashed] (u1) -- (v\i);
    }

    \node[red] at (5,1) {$K_{1,k}$};
\end{tikzpicture}

%% file: fig6.tikz
\begin{tikzpicture}[scale=1, every node/.style={font=\small}]
    \def\N{16}          
    \def\nval{40}        
    \def\alpha{5}       
    \def\kmone{4}       

    \def\r{2.2}

    \foreach \i in {1,...,\N} {
        \pgfmathsetmacro{\angle}{90 - (\i-1)*360/\N}
        \node[circle, fill=black, inner sep=1.2pt,
              label={\angle:$u_{\i}$}] (u\i)
              at ({\r*cos(\angle)}, {\r*sin(\angle)}) {};
    }

    \node[blue] at (0,3) {$C_N$};
    \node[red] at (0,0) {$K_{1,k}$};
    
    \foreach \i in {1,...,\N} {
        \pgfmathtruncatemacro{\j}{mod(\i,\N) + 1}
        \draw[blue, thick] (u\i) -- (u\j);}


    %

    \foreach \i in {4,5}{
    \draw[red, thick, dashed]
        (u1) -- (u\i);
    \pgfmathsetmacro{\butt}{20-\i-2}
    \draw[blue, thick]
        (u1) -- (u\butt);
        }
    \foreach \i in {3,6,7}{
    \pgfmathsetmacro{\butt}{20-\i-2}
    \draw[red, thick, dashed]
        (u1) -- (u\butt);
    \draw[blue, thick]
        (u1) -- (u\i);
        }
\end{tikzpicture}

%% file: main.tex
\section{Main Results}\label{sec:main}

In order to prove Theorem \ref{thm1}, we show that $\tilde{r}(P_k,P_{n})$ is eventually almost subadditive.

\begin{lemma}\label{lem5}
    For any positive integers $k$, $m$ and $n$ satisfying $n> k/2$, 
    \[\tilde{r}(P_k,P_{m+n}) \leq\tilde{r}(P_k,P_{m})+\tilde{r}(P_k,P_{n})+5k.\]
\end{lemma}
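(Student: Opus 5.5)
We combine Lemma \ref{lem3} with Lemma \ref{lem2}. Lemma \ref{lem3} glues a blue $P_m$ and a blue $P_n$ into a blue $P_{m+n-k}$ at cost $k$ extra rounds; the missing length $k$ is then restored by Lemma \ref{lem2} with $t=k$, at cost $2k+2t=4k$ further rounds. The total overhead is $k+4k=5k$, which matches the statement.

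First, apply Lemma \ref{lem2} with $n$ replaced by $m+n-k$ and $t=k$, which is legitimate as long as $m+n-k\geq 1$. This gives
\[
\tilde{r}(P_k,P_{m+n})\leq \tilde{r}(P_k,P_{m+n-k})+4k.
\]
Second, bound $\tilde{r}(P_k,P_{m+n-k})$ using Lemma \ref{lem3}:
\[
\tilde{r}(P_k,P_{m+n-k})\leq \tilde{r}(P_k,P_m)+\tilde{r}(P_k,P_n)+k.
\]
Chaining the two inequalities yields the claimed bound.

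The only real obstacle is the hypothesis $\min\{m,n\}>k/2$ in Lemma \ref{lem3}, whereas the statement here only assumes $n>k/2$. I would handle the case $m\leq k/2$ separately and avoid Lemma \ref{lem3} there. In that case, apply Lemma \ref{lem2} directly with base $n$ and $t=m$:
\[
\tilde{r}(P_k,P_{m+n})\leq \tilde{r}(P_k,P_n)+2k+2m\leq \tilde{r}(P_k,P_n)+3k.
\]
Since $\tilde{r}(P_k,P_m)\geq 0$, this is at most $\tilde{r}(P_k,P_m)+\tilde{r}(P_k,P_n)+5k$. For $m>k/2$, both hypotheses of Lemma \ref{lem3} hold. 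Moreover $m+n-k>0$, so the application of Lemma \ref{lem2} in the first step is valid, and the main argument goes through.

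Finally, I would check that Lemma \ref{lem2}'s strategy makes sense when the starting path length is $m+n-k$. This is fine, because that lemma holds for all positive integers, and $m+n-k\geq 1$ whenever $m,n>k/2$.
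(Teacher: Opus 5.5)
Your proof is correct and uses the same ingredients as the paper: one application of Lemma~\ref{lem3} and one of Lemma~\ref{lem2} with $t=k$, for a total overhead of $k+4k=5k$. The only difference is the order. The paper applies Lemma~\ref{lem3} to $P_{m+k}$ and $P_n$, so $\min\{m+k,n\}>k/2$ holds automatically, and it then uses Lemma~\ref{lem2} to bound $\tilde{r}(P_k,P_{m+k})$; this removes the need for your separate case $m\le k/2$, which you nonetheless handled correctly.
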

\begin{proof}
    Let $k$ be a positive integer. Let $m$ and $n$ be positive integers such that $n>k/2$. Then
    \begin{align*}
        \tilde{r}(P_k,P_{m+n})
        &\leq \tilde{r}(P_k,P_{m+k})+\tilde{r}(P_k,P_n)+k&\text{(Lemma \ref{lem3})}\\
        &\leq \tilde{r}(P_k,P_{m})+\tilde{r}(P_k,P_n)+5k.&\text{(Lemma \ref{lem2})}
    \end{align*}
\end{proof}

In order to prove Theorem \ref{thm2}, we show that $\tilde{r}(P_k,C_{n})$ is eventually almost subadditive.

\begin{lemma}\label{lem6}
    For any positive integers $k$, $m$ and $n$ satisfying $m+n>k^2-k$ and $n>k/2$, 
    \[\tilde{r}(P_k,C_{m+n}) \leq\tilde{r}(P_k,C_{m})+\tilde{r}(P_k,C_{n})+9k.\]
\end{lemma}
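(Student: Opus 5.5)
We will chain the earlier inequalities, using Lemma \ref{lem4} to produce the exact cycle length, Lemma \ref{lem5} to merge two path problems, and Lemma \ref{lem1} to pass from paths back to cycles. The target is
\[
\tilde{r}(P_k,C_{m+n}) \leq \tilde{r}(P_k,C_m)+\tilde{r}(P_k,C_n)+9k .
\]

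\emph{Step 1 (Lemma \ref{lem4}).} Apply Lemma \ref{lem4} with $n$ replaced by $m+n+k$. This requires $m+n+k>k^2$, which is exactly the hypothesis $m+n>k^2-k$. It gives
\[
\tilde{r}(P_k,C_{m+n})\leq \tilde{r}(P_k,P_{m+n+k})+2k .
\]

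\emph{Step 2 (Lemma \ref{lem2}).} Apply Lemma \ref{lem2} with $t=k$, starting from the path of length $m+n$:
\[
\tilde{r}(P_k,P_{m+n+k})\leq \tilde{r}(P_k,P_{m+n})+2k+2k=\tilde{r}(P_k,P_{m+n})+4k .
\]

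\emph{Step 3 (Lemma \ref{lem3}, not Lemma \ref{lem5}).} Using Lemma \ref{lem5} here would cost $5k$, for a total of $2k+4k+5k=11k$, which is too much. Instead, undo the loss of $k$ directly by writing $m+n+k=(m+k)+(n+k)-k$. Apply Lemma \ref{lem3} to the pair $m+k$ and $n+k$; both exceed $k/2$, so the hypothesis holds:
\[
\tilde{r}(P_k,P_{m+n+k})\leq \tilde{r}(P_k,P_{m+k})+\tilde{r}(P_k,P_{n+k})+k .
\]
This replaces Step 2, so Step 1 now gives
\[
\tilde{r}(P_k,C_{m+n})\leq \tilde{r}(P_k,P_{m+k})+\tilde{r}(P_k,P_{n+k})+3k .
\]

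\emph{Step 4 (Lemma \ref{lem2} twice, with $t=k$).} Shifting each path term down by $k$ costs $4k$ apiece:
\[
\tilde{r}(P_k,P_{m+k})\leq \tilde{r}(P_k,P_m)+4k,\qquad \tilde{r}(P_k,P_{n+k})\leq \tilde{r}(P_k,P_n)+4k .
\]
That totals $11k$, still too much.

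\emph{Repairing the constant.} The fix is to shift only once. Write $m+n+k=m+(n+k)$ and apply Lemma \ref{lem3} to the pair $m+k$ and $n$, which needs $n>k/2$ (given) and $m+k>k/2$ (automatic):
\[
\tilde{r}(P_k,P_{m+n})\leq \tilde{r}(P_k,P_{m+k})+\tilde{r}(P_k,P_n)+k .
\]
The cleanest bookkeeping, which reaches exactly $9k$, is
\[
\tilde{r}(P_k,C_{m+n})\leq \tilde{r}(P_k,P_{m+n+k})+2k\leq \tilde{r}(P_k,P_{m+2k})+\tilde{r}(P_k,P_n)+3k .
\]
Here the second inequality is Lemma \ref{lem3} applied to $m+2k$ and $n$, since $(m+2k)+n-k=m+n+k$. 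Then Lemma \ref{lem2} with $t=2k$ gives
\[
\tilde{r}(P_k,P_{m+2k})\leq \tilde{r}(P_k,P_m)+6k .
\]
Adding up, $2k+k+6k=9k$.

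\emph{Step 5 (Lemma \ref{lem1}).} Finally, Lemma \ref{lem1} gives $\tilde{r}(P_k,P_m)\leq\tilde{r}(P_k,C_m)$ and $\tilde{r}(P_k,P_n)\leq \tilde{r}(P_k,C_n)$, which yields the stated inequality with constant $9k$.

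\emph{Main obstacle.} The only delicate point is arranging the order of the lemmas so that their hypotheses hold and the constants sum to exactly $9k$:
\begin{itemize}
\item Lemma \ref{lem4} uses the hypothesis $m+n+k>k^2$.
\item Lemma \ref{lem3} uses $n>k/2$ and $m+2k>k/2$.
\item Lemma \ref{lem2} has no hypothesis.
\end{itemize}
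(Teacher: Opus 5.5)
Your final chain is correct and is exactly the paper's proof: Lemma~\ref{lem4} at $m+n+k$, then Lemma~\ref{lem3} on the pair $(m+2k,n)$, then Lemma~\ref{lem2} with $t=2k$, then Lemma~\ref{lem1}, for a total of $2k+k+6k=9k$, with $m+n>k^2-k$ and $n>k/2$ used at the same points. In a write-up, delete the abandoned Steps~2--4 and the muddled ``$m+n+k=m+(n+k)$'' remark, since they play no role (that application of Lemma~\ref{lem3} bounds $P_{m+n}$, not $P_{m+n+k}$).
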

\begin{proof}
    Let $k$ be a positive integer. Let $m$ and $n$ be positive integers such that $n>k/2$ and $m+n>k^2-k$. Then
    \begin{align*}
        \tilde{r}(P_k,C_{m+n})&\leq \tilde{r}(P_k,P_{m+n+k})+2k& \text{(Lemma \ref{lem4})}\\
        &\leq \tilde{r}(P_k,P_{m+2k})+\tilde{r}(P_k,P_n)+3k&\text{(Lemma \ref{lem3})}\\
        &\leq \tilde{r}(P_k,P_{m})+\tilde{r}(P_k,P_n)+9k&\text{(Lemma \ref{lem2})}\\
        &\leq \tilde{r}(P_k,C_{m})+\tilde{r}(P_k,C_n)+9k.&\text{(Lemma \ref{lem1})}\\
    \end{align*}
\end{proof}

Lemma \ref{lem5} shows that $\tilde{r}(P_k,P_n)$ is eventually almost subadditive, and Lemma \ref{lem6} shows that $\tilde{r}(P_k,C_n)$ is eventually almost subadditive. By Corollary \ref{lemfek}, $\tilde{r}(P_k,P_n)/n$ and $\tilde{r}(P_k,C_n)/n$ converge as $n$ tends to $\infty$. Lastly, we show that they converge to the same value.  

\begin{proof}[Proof of Theorem \ref{thm3}]
    Let $k$ and $n$ be positive integers such that $n>k^2$. Then
    \begin{align*}
        \tilde{r}(P_k,P_{n})&\leq \tilde{r}(P_k,C_{n})& \text{(Lemma \ref{lem1})}\\
        &\leq \tilde{r}(P_k,P_{n+k})+2k&\text{(Lemma \ref{lem4})}\\
        &\leq \tilde{r}(P_k,P_{n})+6k.&\text{(Lemma \ref{lem2})}
    \end{align*}   
    Dividing through by $n$, and applying the squeeze theorem, concludes the proof.
\end{proof}


In order to prove Theorem \ref{thm4}, we show that $\tilde{r}(K_{1,k},P_{n})$ is almost subadditive.

\begin{lemma}\label{lem4.3}
    For any positive integers $k$, $m$ and $n$, 
    \[\tilde{r}(K_{1,k},P_{m+n}) \leq\tilde{r}(K_{1,k},P_{m})+\tilde{r}(K_{1,k},P_{n})+k+k^2.\]
\end{lemma}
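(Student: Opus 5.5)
We will obtain this inequality by chaining Lemma \ref{lem2.4} with Lemma \ref{lem2.2}, following the template of Lemma \ref{lem5}. Lemma \ref{lem2.4} lets us glue two blue paths at a cost of losing $k$ vertices of length and paying $k$ extra rounds. Lemma \ref{lem2.2} lets us recover the lost length, paying $k$ rounds per vertex. Recovering $k$ vertices therefore costs $k\cdot k=k^2$ rounds, which together with the $k$ from gluing gives the additive constant $k+k^2$.

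The catch is that Lemma \ref{lem2.4} requires its first path length to be at least $k$, while the present statement is claimed for all positive $m,n$. So the plan splits into two cases.

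\textbf{Case $m\ge k$.} Apply Lemma \ref{lem2.4} with $m+k$ in place of $m$; the hypothesis $m+k\ge k$ holds trivially. This gives
\[
\tilde{r}(K_{1,k},P_{m+n})\le \tilde{r}(K_{1,k},P_{m+k})+\tilde{r}(K_{1,k},P_{n})+k.
\]
Then apply Lemma \ref{lem2.2} with $t=k$ to get
\[
\tilde{r}(K_{1,k},P_{m+k})\le \tilde{r}(K_{1,k},P_{m})+k^2.
\]
Combining the two yields the claim.

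This first step is not actually restricted to $m\ge k$: since $m+k\ge k$ for every positive $m$, the same two applications work for all positive $m$ and $n$, and no case distinction is needed. The only thing to verify is that the substitution matches Lemma \ref{lem2.4}'s statement exactly. Writing $m'=m+k$, the lemma bounds $\tilde{r}(K_{1,k},P_{m'+n-k})=\tilde{r}(K_{1,k},P_{m+n})$, with hypothesis $m'\ge k$, which holds.

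The expected obstacle is therefore only this bookkeeping: the lemma's hypothesis applies to the enlarged path $m+k$ rather than to $m$ itself. Once that is seen, the argument is a two-line display like the one in Lemma \ref{lem5}, citing Lemma \ref{lem2.4} and then Lemma \ref{lem2.2}.
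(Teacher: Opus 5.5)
Your proposal is correct and is exactly the paper's proof. You apply Lemma~\ref{lem2.4} with $m+k$ in place of $m$, whose hypothesis $m+k\ge k$ holds for every positive $m$, so your initial case split is indeed unnecessary. You then apply Lemma~\ref{lem2.2} with $t=k$ to recover the lost length at a cost of $k^2$ rounds.
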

\begin{proof}
    Let $k$, $m$ and $n$ be positive integers. Then
    \begin{align*}
        \tilde{r}(K_{1,k},P_{m+n})
        &\leq \tilde{r}(K_{1,k},P_{m+k})+\tilde{r}(K_{1,k},P_n)+k&\text{(Lemma \ref{lem2.4})}\\
        &\leq \tilde{r}(K_{1,k},P_{m})+\tilde{r}(K_{1,k},P_n)+k+k^2&\text{(Lemma \ref{lem2.2})}
    \end{align*}
\end{proof}

In order to prove Theorem \ref{thm5}, we show that $\tilde{r}(K_{1,k},C_{n})$ is eventually almost subadditive.

\begin{lemma}\label{lem4.4}
    For any positive integers $k$, $m$ and $n$ satisfying $m+n\ge k+2$, 
    \[\tilde{r}(K_{1,k},C_{m+n}) \leq\tilde{r}(K_{1,k},C_{m})+\tilde{r}(K_{1,k},C_{n})+4k+3k^2.\]
\end{lemma}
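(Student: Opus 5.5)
We will prove the inequality by mirroring the proof of Lemma \ref{lem6}: pass from cycles to a longer path with Lemma \ref{lem2.5}, split that path into two paths with Lemma \ref{lem2.4}, pad the lengths with Lemma \ref{lem2.2}, and return to cycles with Lemma \ref{lem1}. The chain is
\begin{align*}
\tilde{r}(K_{1,k},C_{m+n})&\leq \tilde{r}(K_{1,k},P_{m+n+2k})+3k & \text{(Lemma \ref{lem2.5})}\\
&\leq \tilde{r}(K_{1,k},P_{m+3k})+\tilde{r}(K_{1,k},P_{n})+k & \text{(Lemma \ref{lem2.4})}\\
&\leq \tilde{r}(K_{1,k},P_{m})+\tilde{r}(K_{1,k},P_{n})+4k+3k^2 & \text{(Lemma \ref{lem2.2})}\\
&\leq \tilde{r}(K_{1,k},C_{m})+\tilde{r}(K_{1,k},C_{n})+4k+3k^2. & \text{(Lemma \ref{lem1})}
\end{align*}

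The first step applies Lemma \ref{lem2.5} with $n$ replaced by $m+n+2k$, so that $(m+n+2k)-2k=m+n$. Its hypothesis $m+n+2k\geq 3k+2$ is exactly the assumption $m+n\geq k+2$, which explains that condition in the statement.

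The second step applies Lemma \ref{lem2.4} with $m+3k$ in place of $m$ and $n$ unchanged, giving $(m+3k)+n-k=m+n+2k$. Its hypothesis $m+3k\geq k$ holds automatically. The third step applies Lemma \ref{lem2.2} with $t=3k$, which costs $3k\cdot k=3k^2$ rounds. The constants then total $3k+k+3k^2=4k+3k^2$, matching the statement exactly. The final step is two applications of Lemma \ref{lem1}.

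The only real obstacle is the bookkeeping: each auxiliary lemma must be applied in a regime where its hypothesis holds, and the accumulated constant must come out to exactly $4k+3k^2$. The one nontrivial hypothesis is the $n\ge 3k+2$ condition of Lemma \ref{lem2.5}, and it is precisely what $m+n\ge k+2$ supplies. Unlike Lemma \ref{lem6}, no lower bound on $n$ is needed, because Lemma \ref{lem2.4} only requires its first argument to be at least $k$, and that argument is $m+3k$.
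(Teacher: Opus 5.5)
Your proposal is correct and matches the paper's proof exactly: the same chain through Lemmas \ref{lem2.5}, \ref{lem2.4}, \ref{lem2.2} and \ref{lem1}, with the same parameter choices and hypothesis checks. One small slip: the constant on the second displayed line should be $+4k$, since the $3k$ from the first line carries forward, not $+k$; your prose total $3k+k+3k^2=4k+3k^2$ is right.
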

\begin{proof}
    Let $k$ be a positive integer. Let $m$ and $n$ be positive integers such $m+n\geq k+2$. Then
    \begin{align*}
        \tilde{r}(K_{1,k},C_{m+n})&\leq \tilde{r}(K_{1,k},P_{m+n+2k})+3k& \text{(Lemma \ref{lem2.5})}\\
        &\leq \tilde{r}(K_{1,k},P_{m+3k})+\tilde{r}(K_{1,k},P_n)+4k&\text{(Lemma \ref{lem2.4})}\\
        &\leq \tilde{r}(K_{1,k},P_{m})+\tilde{r}(K_{1,k},P_n)+4k+3k^2&\text{(Lemma \ref{lem2.2})}\\
        &\leq \tilde{r}(K_{1,k},C_{m})+\tilde{r}(K_{1,k},C_n)+4k+3k^2.&\text{(Lemma \ref{lem1})}\\
    \end{align*}
\end{proof}

Lemma \ref{lem4.3} shows that $\tilde{r}(K_{1,k},P_n)$ is almost subadditive, and Lemma \ref{lem4.4} shows that $\tilde{r}(K_{1,k},C_n)$ is eventually almost subadditive. 
By Corollary \ref{lemfek}, $\tilde{r}(K_{1,k},P_n)/n$ and $\tilde{r}(K_{1,k},C_n)/n$ converge as $n$ tends to $\infty$. Lastly, we show that they converge to the same value. 

\begin{proof}[Proof of Theorem \ref{thm6}]
    Let $k$ and $n$ be positive integers such that $n\geq k+2$. Then
    \begin{align*}
        \tilde{r}(K_{1,k},P_{n})&\leq \tilde{r}(K_{1,k},C_{n})& \text{(Lemma \ref{lem1})}\\
        &\leq \tilde{r}(K_{1,k},P_{n+2k})+3k&\text{(Lemma \ref{lem2.5})}\\
        &\leq \tilde{r}(K_{1,k},P_{n})+3k+2k^2.&\text{(Lemma \ref{lem2.2})}
    \end{align*}   
    Dividing everything by $n$ and applying the squeeze theorem, concludes the proof.
\end{proof}

%% file: remarks.tex
\section{Future Work}

We not only show that the asymptotic values of $\tilde{r}(P_k,P_n)$ and $\tilde{r}(P_k,C_n)$ exist, but that they are also equal. Our approach for proving existence revolves around proving almost subadditivity and applying a stronger version of Fekete's Lemma. On the other hand, equivalence comes from showing that, in a relatively small number of rounds, a blue copy of $C_n$ or a red copy of $P_k$ can be forced starting with a blue copy of $P_n$. We use this same approach to show that the asymptotic values of $\tilde{r}(K_{1,k},P_n)$ and $\tilde{r}(K_{1,k},C_n)$ exist and are equal.

In terms of showing that the asymptotic values exist, the only unresolved off-diagonal case for paths and cycles is for the online Ramsey number $\tilde{r}(C_k,P_n)$ when $k$ is odd. Unfortunately, it can be shown via a Painter strategy that our approach does not work for this case. Our approach also does not work for the diagonal cases $\tilde{r}(P_n,P_n)$ and $\tilde{r}(C_n,C_n)$. However, we believe that each of these asymptotic values exist, and finding a new approach to show it would be interesting.

We leave the reader with one last problem to consider. Assuming that both limits exist, for which graphs $H$ do the online Ramsey numbers $\tilde{r}(H,P_n)$ and $\tilde{r}(H,C_n)$ have the same asymptotic value? 

%% file: ramsey.bib
@article {ada-bed-24,
    AUTHOR = {Adamski, Grzegorz and Bednarska-Bzd\c{e}ga, Ma\l{}gorzata},
     TITLE = {Online size {R}amsey numbers: odd cycles vs connected graphs},
   JOURNAL = {Electron. J. Combin.},
  FJOURNAL = {Electronic Journal of Combinatorics},
    VOLUME = {31},
      YEAR = {2024},
    NUMBER = {3},
     PAGES = {Paper No. 3.16, 15},
      ISSN = {1077-8926},
   MRCLASS = {05C57 (05C55 91A46)},
  MRNUMBER = {4790336},
       DOI = {10.37236/11644},
       URL = {https://doi-org.kean.idm.oclc.org/10.37236/11644},
}

@article {ada-bed-bla-24,
    AUTHOR = {Adamski, Grzegorz and Bednarska-Bzd\c{e}ga, Ma\l{}gorzata and
              Bla\v{z}ej, V\'aclav},
     TITLE = {Online {R}amsey numbers: long versus short cycles},
   JOURNAL = {SIAM J. Discrete Math.},
  FJOURNAL = {SIAM Journal on Discrete Mathematics},
    VOLUME = {38},
      YEAR = {2024},
    NUMBER = {4},
     PAGES = {3150--3175},
      ISSN = {0895-4801,1095-7146},
   MRCLASS = {05C55 (05C38 91A46)},
  MRNUMBER = {4839679},
MRREVIEWER = {Stanis\l aw\ P.\ Radziszowski},
       DOI = {10.1137/23M156183X},
       URL = {https://doi-org.kean.idm.oclc.org/10.1137/23M156183X},
}

@article {bec-83,
    AUTHOR = {Beck, J\'ozsef},
     TITLE = {On size {R}amsey number of paths, trees, and circuits. {I}},
   JOURNAL = {J. Graph Theory},
  FJOURNAL = {Journal of Graph Theory},
    VOLUME = {7},
      YEAR = {1983},
    NUMBER = {1},
     PAGES = {115--129},
      ISSN = {0364-9024,1097-0118},
   MRCLASS = {05C55},
  MRNUMBER = {693028},
MRREVIEWER = {Saul\ Stahl},
       DOI = {10.1002/jgt.3190070115},
       URL = {https://doi-org.kean.idm.oclc.org/10.1002/jgt.3190070115},
}

@article {bed-24,
    AUTHOR = {Bednarska-Bzd\c{e}ga, Ma\l{}gorzata},
     TITLE = {Off-diagonal online size {R}amsey numbers for paths},
   JOURNAL = {European J. Combin.},
  FJOURNAL = {European Journal of Combinatorics},
    VOLUME = {118},
      YEAR = {2024},
     PAGES = {Paper No. 103873, 16},
      ISSN = {0195-6698,1095-9971},
   MRCLASS = {05C55 (05C38)},
  MRNUMBER = {4668780},
MRREVIEWER = {Stanis\l aw\ P.\ Radziszowski},
       DOI = {10.1016/j.ejc.2023.103873},
       URL = {https://doi-org.kean.idm.oclc.org/10.1016/j.ejc.2023.103873},
}

@incollection {bla-dvo-val-19,
    AUTHOR = {Bla\v{z}ej, V\'aclav and Dvo\v{r}\'ak, Pavel and Valla, Tom\'a\v{s}},
     TITLE = {On induced online {R}amsey number of paths, cycles, and trees},
 BOOKTITLE = {Computer science---theory and applications},
    SERIES = {Lecture Notes in Comput. Sci.},
    VOLUME = {11532},
     PAGES = {60--69},
 PUBLISHER = {Springer, Cham},
      YEAR = {2019},
      ISBN = {978-3-030-19955-5; 978-3-030-19954-8},
   MRCLASS = {05C55},
  MRNUMBER = {3976039},
       DOI = {10.1007/978-3-030-19955-5\_6},
       URL = {https://doi-org.kean.idm.oclc.org/10.1007/978-3-030-19955-5_6},
}

@article{bru-erd-52,
  author  = {De Bruijn, N. G. and Erd\H{o}s, P.},
  title   = {Some linear and some quadratic recursion formulas. I},
  journal = {Proc. Koninklijke Nederlandse Akademie van Wetenschappen, Series A},
  volume  = {55},
  year    = {1952},
  pages   = {374--382}
}

@article {cdll-15,
    AUTHOR = {Cyman, Joanna and Dzido, Tomasz and Lapinskas, John and Lo,
              Allan},
     TITLE = {On-line {R}amsey numbers of paths and cycles},
   JOURNAL = {Electron. J. Combin.},
  FJOURNAL = {Electronic Journal of Combinatorics},
    VOLUME = {22},
      YEAR = {2015},
    NUMBER = {1},
     PAGES = {Paper 1.15, 32},
      ISSN = {1077-8926},
   MRCLASS = {05C55 (05C57 91A43 91A46)},
  MRNUMBER = {3315457},
MRREVIEWER = {Vera\ Rosta},
       DOI = {10.37236/4097},
       URL = {https://doi-org.kean.idm.oclc.org/10.37236/4097},
}

@article {efrs-78,
    AUTHOR = {Erd\H{o}s, P. and Faudree, R. J. and Rousseau, C. C. and
              Schelp, R. H.},
     TITLE = {The size {R}amsey number},
   JOURNAL = {Period. Math. Hungar.},
  FJOURNAL = {Periodica Mathematica Hungarica. Journal of the J\'anos Bolyai
              Mathematical Society},
    VOLUME = {9},
      YEAR = {1978},
    NUMBER = {1-2},
     PAGES = {145--161},
      ISSN = {0031-5303,1588-2829},
   MRCLASS = {05C55},
  MRNUMBER = {479691},
MRREVIEWER = {F.\ Harary},
       DOI = {10.1007/BF02018930},
       URL = {https://doi-org.kean.idm.oclc.org/10.1007/BF02018930},
}

@article{fek-23,
  author  = {Michael Fekete},
  title   = {Über die Verteilung der Wurzeln bei gewissen algebraischen Gleichungen mit ganzzahligen Koeffizienten},
  journal = {Mathematische Zeitschrift},
  volume  = {17},
  year    = {1923},
  pages   = {228--249}
}

@article {gry-kie-pra-08,
    AUTHOR = {Grytczuk, J. A. and Kierstead, H. A. and Pra\l{}at, P.},
     TITLE = {On-line {R}amsey numbers for paths and stars},
   JOURNAL = {Discrete Math. Theor. Comput. Sci.},
  FJOURNAL = {Discrete Mathematics \& Theoretical Computer Science. DMTCS.},
    VOLUME = {10},
      YEAR = {2008},
    NUMBER = {3},
     PAGES = {63--74},
      ISSN = {1365-8050},
   MRCLASS = {05C55 (05C15 68R10 91A43)},
  MRNUMBER = {2445473},
MRREVIEWER = {Stanis\l aw\ P.\ Radziszowski},
}

@article {lat-tan-21,
    AUTHOR = {Mohd Latip, Fatin Nur Nadia Binti and Tan, Ta Sheng},
     TITLE = {A note on on-line {R}amsey numbers of stars and paths},
   JOURNAL = {Bull. Malays. Math. Sci. Soc.},
  FJOURNAL = {Bulletin of the Malaysian Mathematical Sciences Society},
    VOLUME = {44},
      YEAR = {2021},
    NUMBER = {5},
     PAGES = {3511--3521},
      ISSN = {0126-6705,2180-4206},
   MRCLASS = {05C55 (05C57 05D10)},
  MRNUMBER = {4296347},
MRREVIEWER = {Alexander\ Daniel\ Halperin},
       DOI = {10.1007/s40840-021-01130-x},
       URL = {https://doi-org.kean.idm.oclc.org/10.1007/s40840-021-01130-x},
}

@article {pra-08,
    AUTHOR = {Pra\l{}at, Pawe\l},
     TITLE = {A note on small on-line {R}amsey numbers for paths and their
              generalization},
   JOURNAL = {Australas. J. Combin.},
  FJOURNAL = {The Australasian Journal of Combinatorics},
    VOLUME = {40},
      YEAR = {2008},
     PAGES = {27--36},
      ISSN = {1034-4942,2202-3518},
   MRCLASS = {05C55 (05-04)},
  MRNUMBER = {2381412},
MRREVIEWER = {Andr\'as\ Gy\'arf\'as},
}

@article {pra-12,
    AUTHOR = {Pra\l{}at, Pawe\l},
     TITLE = {A note on off-diagonal small on-line {R}amsey numbers for
              paths},
   JOURNAL = {Ars Combin.},
  FJOURNAL = {Ars Combinatoria. A Canadian Journal of Combinatorics},
    VOLUME = {107},
      YEAR = {2012},
     PAGES = {295--306},
      ISSN = {0381-7032,2817-5204},
   MRCLASS = {05C55},
  MRNUMBER = {3013008},
}

@article {mon-por-24,
    AUTHOR = {Mond, Adva and Portier, Julien},
     TITLE = {The asymptotic of off-diagonal online {R}amsey numbers for
              paths},
   JOURNAL = {European J. Combin.},
  FJOURNAL = {European Journal of Combinatorics},
    VOLUME = {122},
      YEAR = {2024},
     PAGES = {Paper No. 104032, 12},
      ISSN = {0195-6698,1095-9971},
   MRCLASS = {05C55 (05C38 05C57)},
  MRNUMBER = {4783966},
MRREVIEWER = {Tomasz\ Dzido},
       DOI = {10.1016/j.ejc.2024.104032},
       URL = {https://doi-org.kean.idm.oclc.org/10.1016/j.ejc.2024.104032},
}

@article {son-wan-zha-25,
    AUTHOR = {Song, Ruyu and Wang, Sha and Zhang, Yanbo},
     TITLE = {Online {R}amsey numbers of {$K_{1, 3}$} versus paths},
   JOURNAL = {Discrete Appl. Math.},
  FJOURNAL = {Discrete Applied Mathematics. The Journal of Combinatorial
              Algorithms, Informatics and Computational Sciences},
    VOLUME = {377},
      YEAR = {2025},
     PAGES = {218--224},
      ISSN = {0166-218X,1872-6771},
   MRCLASS = {05C55},
  MRNUMBER = {4929521},
MRREVIEWER = {Tomasz\ Dzido},
       DOI = {10.1016/j.dam.2025.06.050},
       URL = {https://doi-org.kean.idm.oclc.org/10.1016/j.dam.2025.06.050},
}

@misc{zha-zha-23,
      title={Proof of a Conjecture on Online Ramsey Numbers of Paths}, 
      author={Yanbo Zhang and Yixin Zhang},
      year={2023},
      eprint={2302.13640},
      archivePrefix={arXiv},
      primaryClass={math.CO},
      url={https://arxiv.org/abs/2302.13640}, 
}

@misc{zhi-zha-26,
      title={Online Ramsey numbers of the claw versus cycles}, 
      author={Hexuan Zhi and Yanbo Zhang},
      year={2026},
      eprint={2601.05452},
      archivePrefix={arXiv},
      primaryClass={math.CO},
      url={https://arxiv.org/abs/2601.05452}, 
}
